\documentclass[12pt,reqno]{amsart}

\numberwithin{equation}{section} 

\oddsidemargin0.3cm
\evensidemargin0.3cm
\textwidth16cm
\textheight 21 cm
\topmargin=1cm

%header%%%%%%%%%%%%%%	
%------------------------
%------------------------
\usepackage{enumerate}
\usepackage{amssymb}
\usepackage{amsmath}
\usepackage{amscd}
\usepackage{amsthm}
\usepackage{amsfonts}
\usepackage{graphicx}
\usepackage[all]{xy}
\usepackage{verbatim}
\usepackage{hyperref}

\newtheorem{theorem}{Theorem}[section]

\newtheorem{lemma}[theorem]{Lemma}
\newtheorem{corollary}[theorem]{Corollary}
\newtheorem{definition}[theorem]{Definition}

\newtheorem{conjecture}[theorem]{Conjecture}

\newtheorem{proposition}[theorem]{Proposition}
\newtheorem{notation}[theorem]{Notation}
\newtheorem{problem}[theorem]{Problem}

%------------------------------------------------------------------------------------------------------------
%------------------------------------------------------------------------------------------------------------
%              Latin Letters
%------------------------------------------------------------------------------------------------------------
%------------------------------------------------------------------------------------------------------------
\newcommand{\al}{\alpha}
\newcommand{\be}{\beta}
\newcommand{\ga}{\gamma}
\newcommand{\Ga}{\Gamma}
\newcommand{\del}{\delta}

\newcommand{\lam}{\lambda}
\newcommand{\Lam}{\Lambda}
\newcommand{\eps}{\epsilon}

\newcommand{\sig}{\sigma}

\newcommand{\om}{\omega}
\newcommand{\Om}{\Omega}
\newcommand{\vphi}{\varphi}
%------------------------------------------------------------------------------------------------------------
%------------------------------------------------------------------------------------------------------------
%-----Capital letters--------------

\newcommand{\cE}{\mathcal{E}}

\newcommand{\cN}{\mathcal{N}}

\newcommand{\cS}{\mathcal{S}}

\newcommand{\bR}{\mathbb{R}}
\newcommand{\bZ}{\mathbb{Z}}
\newcommand{\bQ}{\mathbb{Q}}

\newcommand{\bN}{\mathbb{N}}

\newcommand{\gog}{\mathfrak{g}}
\newcommand{\goh}{\mathfrak{h}}

\newcommand{\gou}{\mathfrak{u}}

%------------------------------------------------------------------------------------------------------------
%------------------------------------------------------------------------------------------------------------
%              Latin Letters
%------------------------------------------------------------------------------------------------------------
%------------------------------------------------------------------------------------------------------------
%------------------------------------------------------------------------------------------------------------
%------------------------------------------------------------------------------------------------------------

\newcommand\av[1]{\left|#1\right|}
\newcommand\set[1]{\left\{#1\right\}}
\newcommand\pa[1]{\left(#1\right)}

\newcommand\idist[1]{\langle#1\rangle}

%------------------------------------------------------------------------------------------------------------
%------------------------------------------------------------------------------------------------------------
%             Arrows
%------------------------------------------------------------------------------------------------------------
%------------------------------------------------------------------------------------------------------------

\newcommand{\onto}{\xymatrix{\ar@{>>}[r]&}}
\newcommand{\da}[4]{\xymatrix{#1 \ar@<.5ex>[r]^{#2} \ar@<-.5ex>[r]_{#3} & #4}}
%------------------------------------------------------------------------------------------------------------
%------------------------------------------------------------------------------------------------------------

\begin{document}
\title{On a generalization of Littlewood's conjecture}
\author{Uri Shapira}
\thanks{* Part of the author's Ph.D thesis at the Hebrew University of Jerusalem.\\ Email: ushapira@gmail.com}
%------------------------------------Abstract
\begin{abstract}
We present a class of lattices in $\bR^d$ ($d\ge 2$) which we call $GL-lattices$ and conjecture that any lattice is such. This conjecture is referred to as GLC.  Littlewood's conjecture amounts to saying that $\bZ^2$ is GL. We then prove existence of GL lattices by first establishing a dimension bound for the set of possible exceptions. Existence of vectors ($GL-vectors$) in $\bR^d$ with special Diophantine properties is proved by similar methods. For dimension $d\ge 3$ we give explicit constructions of GL lattices (and in fact a much stronger property). We also show that GLC is implied by a conjecture of G. A. Margulis concerning bounded orbits of the diagonal group. The unifying theme of the methods is to exploit rigidity results in dynamics (\cite{EKL},\cite{B},\cite{LW}), and derive results in Diophantine approximations or the geometry of numbers. 
\end{abstract}

\maketitle
\tableofcontents
%
%
%-------------------------------------introduction
%
%
%-----------------------------Results and conjectures
\section{Notation Results and conjectures}
\label{sec-results}
We first fix our notation and define the basic objects to be discussed in this paper. Throughout this paper $d\ge 2$ is an integer.
Let $X_d$ denote the space of $d$-dimensional unimodular lattices in $\bR^d$ and let $Y_d$ denote the space of translates of such lattices. Points of $Y_d$ will be referred to as \textit{grids}, hence for $x\in X_d$ and $v\in\bR^d$, $y=x+v\in Y_d$ is the grid obtained by translating the lattice $x$ by the vector $v$. We denote by $\pi$  the natural projection 
\begin{equation}\label{projections}
 Y_d \stackrel{\pi}{\longrightarrow} X_d,\quad x+v  \mapsto x.
 \end{equation}
 In the next section we shall see that $X_d,Y_d$ are homogeneous spaces and equip them with metrics.
 Let $N:\bR^d\to \bR$ denote the function $N(w)=\prod_1^dw_i$ (note that we do not recall the dimension $d$ in the notation). 
For each $x\in X_d$, we identify the fiber $\pi^{-1}(x)$ in $Y_d$ with the torus $\bR_d/x$. This enables us to define an operation of multiplication by an integer $n$ on $Y_d$, i.e. if $y=x+v\in Y_d$ then $ny=x+nv$. For a grid $y\in Y_d$, we denote
\begin{equation}\label{mu}
N(y)= \inf\set{\av{N(w)} : w\in y }.
\end{equation}
The function $N:Y_d\to\bR$ will be of most interest to us.
The main objective of this paper is to discuss the following generalization of Littlewood's conjecture, referred to in this paper as GLC:
\begin{conjecture}[GLC]\label{con:Bigwood conjecture}
For any $d\ge 2$ and $y\in Y_d$ 
\begin{equation}\label{eq:Bigwood conjecture}
\inf_{n\ne 0}\av{nN\pa{ny}}=0.
\end{equation}
\end{conjecture}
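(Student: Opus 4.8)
The plan is to recast \eqref{eq:Bigwood conjecture} as a statement about orbits of the full diagonal group $A\subset\mathrm{SL}_{d+1}(\bR)$ of positive determinant-one diagonal matrices, and then to deploy the available rigidity results. To a grid $y=x+v\in Y_d$ attach the unimodular lattice $\Lambda_y\subset\bR^{d+1}$ spanned by $x\times\set 0$ together with $(v,1)$; since $\Lambda_y\cap\pa{\bR^d\times\set n}=(ny)\times\set n$, the product $\Phi(p):=p_1\cdots p_{d+1}$ of all $d+1$ coordinates restricts on the height-$n$ slice to $w\mapsto nN(w)$, and one reads off the identity
\begin{equation}\label{eq:proposal-reduction}
\inf_{n\ne0}\av{nN\pa{ny}}=\inf\set{\,\av{\Phi(p)}\;:\;p\in\Lambda_y,\ p_{d+1}\ne0\,}.
\end{equation}
Thus GLC asserts that this \emph{restricted} infimum vanishes for every lattice of the shape $\Lambda_y$. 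The restriction ``$p_{d+1}\ne0$'' is where the difficulty lies: $\Lambda_y$ always contains the rational hyperplane $W=\bR^d\times\set0$, on which $\Phi\equiv0$, so the unrestricted infimum is always $0$, and --- by Mahler's criterion together with the elementary identity $\inf_{a\in A}\norm{ap}=\sqrt{d+1}\,\av{\Phi(p)}^{1/(d+1)}$, valid whenever all coordinates of $p$ are nonzero --- the $A$-orbit of $\Lambda_y$ is never bounded. What a would-be counterexample gives is instead a weaker, \emph{$W$-relative} tameness: if the right-hand side of \eqref{eq:proposal-reduction} equals some $c>0$, then every $p\in\Lambda_y$ with $p_{d+1}\ne0$ must have all coordinates nonzero, whence $\norm{ap}\ge\sqrt{d+1}\,c^{1/(d+1)}$ for every $a\in A$; so along its $A$-orbit the lattice $\Lambda_y$ can enter the cusp only by collapsing the sublattice $\Lambda_y\cap W$, while in every direction transverse to $W$ it stays uniformly away from the cusp.

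From here the strategy is the one by now standard in this circle of problems. Averaging such a $W$-relatively tame $A$-orbit and passing to a weak-$*$ limit (in the space of lattices in $\bR^{d+1}$ that carry a marked rational hyperplane, where the collapse of $\Lambda_y\cap W$ no longer forces escape of mass) produces a nonzero $A$-invariant probability measure $\mu$ inheriting the same relative tameness. One then invokes the measure-rigidity theorems of \cite{EKL}, \cite{LW} (and \cite{B}): any ergodic component of $\mu$ of positive entropy must be algebraic, and an algebraic, $W$-relatively tame measure does not exist. This does not yet contradict the existence of the counterexample --- $\mu$ could have zero entropy everywhere --- but it does yield a \emph{dimension bound}: the set of grids $y$ for which \eqref{eq:Bigwood conjecture} fails has Hausdorff dimension strictly below $\dim Y_d$. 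In particular GL lattices exist --- indeed almost every lattice is GL --- in every dimension $d\ge2$, and GL vectors can be produced by the same scheme.

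For $d\ge3$ one can do better on an explicit family. The rigid configurations that the measure-rigidity step leaves open are governed by lattices $x$ whose $A$-orbit in $X_d$ is a single compact orbit, and by the classification of such orbits these are, up to commensurability and the $A$-action, the lattices attached to orders $\cO$ in totally real number fields of degree $d$. For such an $x$ and a grid $y=x+v$ one argues by hand: if $nv\in x$ for some $n\ne0$ --- i.e.\ $v$ is rational over the field --- then $ny$ passes through the origin and $N(ny)=0$; and if $v$ is irrational, the units of $\cO$ supply a rank-$(d-1)$ family of diagonal symmetries of the configuration, and a pigeonhole / geometry-of-numbers argument in this higher-rank setting forces $\av{nN(ny)}\to0$ along a sequence of $n$, at a controlled rate. (This rate is the ``stronger property'' mentioned in the abstract; it has no analogue in rank one, which is why the case $x=\bZ^2$, $d=2$ is not reached.) Combining this with the previous paragraph, GLC for $d\ge3$ follows from Margulis's conjecture that every bounded $A$-orbit in $X_d$ is compact: granting that conjecture, the $W$-relative tameness above can be upgraded to the boundedness of a related orbit, which the conjecture then forces to be one of the compact number-field orbits already handled.

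The principal obstacle is the distance between measure rigidity and topological rigidity. The object manufactured from a hypothetical counterexample is only relatively tame and carries no entropy to spare, so \cite{EKL}, which controls only positive-entropy invariant measures, does not close the loop; what is really needed is the purely topological assertion that such an orbit cannot be wild --- in essence Margulis's conjecture --- and that is open for $d\ge3$. A subsidiary, more technical obstacle is to make the relative rigidity over a compact $A$-orbit entirely precise (ruling out intermediate orbit closures for a grid sitting over such an orbit), which is again available only in the higher-rank range. And for $d=2$ the statement \eqref{eq:Bigwood conjecture} with $x=\bZ^2$ and $v=(\al,\be)$ is verbatim Littlewood's conjecture, so no argument along these lines can be expected to settle that case.
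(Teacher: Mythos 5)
The statement you were asked about is Conjecture~\ref{con:Bigwood conjecture} itself, which the paper does not prove and which remains open (for $d=2$ and $x=\bZ^2$ it is verbatim Littlewood's conjecture); your text correctly recognizes this and offers a plan of attack rather than a proof. That plan reproduces the paper's actual program almost exactly: your identity \eqref{eq:proposal-reduction} is the paper's \eqref{another mu} via the embedding $\tau$; your ``$W$-relative tameness'' is the paper's boundedness of the cone orbit $A_{d+1}^+\tau_y$ (the cone formulation is cleaner, since elements of $A_{d+1}^+$ expand the hyperplane $W$ and so cannot produce short vectors there); the dimension bound via the measure rigidity of \cite{EKL} and the entropy argument of \cite{EK} is Theorem~\ref{theorem:the set of exceptions to Bigwood conjecture}; and the explicit examples for $d\ge3$ and the implication from Margulis's conjecture are Theorem~\ref{FL} and Proposition~\ref{Margulis implies GLC}. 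Two small corrections to your sketch: the strong property obtained for lattices from totally real fields of degree $d\ge3$ is not a decay rate but exact vanishing ($N(ny)=0$ for some $n\ne0$, via Berend's ID theorem forcing a rational grid into every orbit closure in the fiber), and the implication ``Margulis $\Rightarrow$ GLC'' is proved in the paper for all $d\ge2$, not only $d\ge3$ (it uses \cite{LW} to compare the orbit closure of $\tau_y$ with a compact $A_{d+1}$ orbit, rather than reducing to the number-field lattices already handled).
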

\begin{definition}
\begin{enumerate}
\item A grid $y\in Y_d$ is L (\textit{Littlewood}) if \eqref{eq:Bigwood conjecture} holds. 
\item A lattice  $x\in X_d$ is GL if any grid $y\in\pi^{-1}(x)$ is L.
\item A vector $v\in \bR^d$ is GL if for any $x\in X_d$, the grid $y=x+v$ is L. 
\end{enumerate}
\end{definition}
Thus conjecture \ref{con:Bigwood conjecture} could be rephrased as saying that any lattice (resp vector) is GL. Of particular interest are GL lattices and vectors. 
For example, when $d=2$, $x=\bZ^2\in X_2$ and $v=(\al,\be)^t\in\bR^2$, the reader should untie the definitions and see that the grid $x+v$ satisfies ~\eqref{eq:Bigwood conjecture}, if and only if $\inf_{n\ne 0} n\idist{n\al}\idist{n\be}=0$ (where we denote for $\ga\in\bR$,  $\idist{\ga}=inf_{m\in \bZ}\av{\ga-m}$), i.e. if and only if the numbers $\al,\be$ satisfy the well known Littlewood conjecture (see \cite{Ma1}). Thus this conjecture could be stated as follows:
\begin{conjecture}[Littlewood]
The lattice $\bZ^2$ is GL.
\end{conjecture}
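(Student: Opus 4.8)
The statement is the classical Littlewood conjecture and is open; the following is the dynamical line of attack one is inevitably led to, together with the point at which it stalls. By the dictionary recorded above in Section~\ref{sec-results}, the assertion that $\bZ^2$ is GL is the statement that for every $v=(\al,\be)^t\in\bR^2$ one has $\inf_{n\ne 0} n\idist{n\al}\idist{n\be}=0$. The plan is to transport this into a statement about orbits of the full diagonal group on $X_3$. To $(\al,\be)$ associate the unimodular lattice $z_{\al,\be}=u_{\al,\be}\bZ^3\in X_3$, where $u_{\al,\be}=\pa{\begin{smallmatrix}1&0&\al\\0&1&\be\\0&0&1\end{smallmatrix}}$, and let $A<\operatorname{SL}_3(\bR)$ be the full diagonal subgroup, $A=\set{\operatorname{diag}(e^{t_1},e^{t_2},e^{t_3}):t_1+t_2+t_3=0}\cong\bR^2$. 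A short computation with Mahler's criterion shows that the values of $N$ on the nonzero points of $a\cdot z_{\al,\be}$, as $a$ ranges over $A$, realise exactly the quantities $n\idist{n\al}\idist{n\be}$ (the integer $n$ being the third coordinate of a lattice point, the points with $n=0$ contributing nothing). Hence $\inf_{n\ne 0}n\idist{n\al}\idist{n\be}=0$ if and only if the orbit $A\,z_{\al,\be}$ is \emph{unbounded} in $X_3$, i.e. accumulates on the cusp. Thus ``$\bZ^2$ is GL'' is equivalent to: no lattice of the form $z_{\al,\be}$ has a bounded $A$-orbit.

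Suppose, for contradiction, that some $z=z_{\al,\be}$ has a bounded $A$-orbit. Averaging the point mass $\del_z$ over a Følner sequence in $A\cong\bR^2$ and passing to a weak-$*$ limit produces an $A$-invariant Borel probability measure $\mu$; since the orbit stays in a fixed compact subset of the noncompact space $X_3$ there is no escape of mass, so $\mu$ is genuinely a probability measure, supported on the compact set $\overline{A\,z}$, which is a proper subset of $X_3$. Disintegrating into $A$-ergodic components we may take $\mu$ itself to be $A$-ergodic, still supported on a proper compact set.

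The crux is the measure-classification input. If $\mu$ has positive entropy for \emph{some} one-parameter subgroup of $A$, then the rigidity theorem of Einsiedler--Katok--Lindenstrauss (\cite{EKL}) forces $\mu$ to be the homogeneous (Haar) measure on $X_3$, whose support is all of $X_3$ --- contradicting that $\operatorname{supp}\mu$ is a proper compact set. One is therefore reduced to ruling out $A$-invariant, $A$-ergodic probability measures of \emph{zero} entropy supported on compact sets, equivalently to ruling out bounded non-dense $A$-orbits. This last step is precisely the conjecture of Margulis invoked in the abstract (every bounded $A$-orbit on $X_3$ is compact, and the compact $A$-orbits are rigid), and it is where the argument --- and, at present, the field --- stops. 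The positive-entropy half does yield something unconditional: via the variational principle together with the covering estimates of \cite{EKL}, the set of $(\al,\be)$ whose orbit is bounded has Hausdorff dimension zero; but emptiness, which is what the present statement asserts, is strictly stronger.

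The main obstacle, then, is the zero-entropy alternative. One reason $d=2$ is the genuinely hard case: in the explicit higher-dimensional constructions announced in the abstract one can arrange that the relevant limit measures carry \emph{forced} positive entropy (for instance by exploiting that in higher rank extra invariance can be produced, or by invoking \cite{B},\cite{LW} in place of \cite{EKL}), so that the rigidity theorems apply outright; for $\bZ^2$, where $A$ has rank exactly $2$ and no such extra structure is available, the zero-entropy measures cannot presently be excluded, and that exhausts the gap between what these methods prove and the Littlewood conjecture itself. In summary: (i) reduce to boundedness of a single $A$-orbit in $X_3$; (ii) build an $A$-invariant probability measure on its closure; (iii) apply \cite{EKL} when the entropy is positive; (iv) the zero-entropy case is Margulis's bounded-orbit conjecture, which is the impasse.
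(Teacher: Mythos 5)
The statement you were given is Littlewood's conjecture itself; the paper records it as a \emph{conjecture} and offers no proof (its theorems only bound the set of exceptions and give conditional or higher-dimensional results), and your write-up correctly does not claim to prove it either. So the only thing to assess is the reduction you sketch, and there is a concrete error in step (i). With the \emph{full} diagonal group $A<SL_3(\bR)$, the orbit $A\,z_{\al,\be}$ is unbounded for \emph{every} $(\al,\be)$: the lattice $z_{\al,\be}=u_{\al,\be}\bZ^3$ contains the vectors $(1,0,0)^t$ and $(0,1,0)^t$, whose last coordinate vanishes, and these can be made arbitrarily short by elements of $A$ (this is exactly the observation made in the paper's proof of proposition~\ref{Margulis implies GLC}). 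Hence your asserted equivalence ``$\inf_{n\ne 0}n\idist{n\al}\idist{n\be}=0$ iff $A\,z_{\al,\be}$ is unbounded'' is false, and the contradiction hypothesis ``some $z_{\al,\be}$ has a bounded $A$-orbit'' is vacuously never satisfied, while Littlewood does not follow. The correct dynamical dictionary uses the cone $A_{3}^+$ of \eqref{the cone A_{d+1}^+}: unboundedness of the $A_3^+$-trajectory of $\tau_y$ implies $y$ is L (lemma~\ref{lemma:unboundedness imply LP}), and to produce a point with a bounded full $A$-orbit, or an $A$-invariant measure, one must first pass to a limit point $z$ of the bounded $A_3^+$-trajectory and then transfer information back to $\tau_y$ via the topological rigidity of \cite{LW}, as in the paper's proof of proposition~\ref{Margulis implies GLC}; your F\o lner averaging applied to $z_{\al,\be}$ itself cannot get started.

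Even after this repair, steps (ii)--(iv) deliver only what the paper (and \cite{EKL}) already prove: the positive-entropy alternative yields the dimension-zero statement for exceptional $(\al,\be)$ (cf.\ corollary~\ref{corollary 2}), and the zero-entropy alternative is precisely conjecture~\ref{Margulis conjecture}, so the argument amounts to proposition~\ref{Margulis implies GLC} plus known partial results, not a proof of the statement. Your own summary acknowledges this impasse, which is the honest conclusion; but as submitted, the reduction in (i) must be restated in terms of $A_3^+$ rather than $A$, since in its present form the pivotal equivalence is simply incorrect. A minor further point: your aside that the higher-dimensional constructions work by forcing positive entropy and ``invoking \cite{B},\cite{LW} in place of \cite{EKL}'' misdescribes \S\ref{sec:lattices that satisfy Bigwood}, which instead uses Berend's ID theorem for the stabilizer $A_{d,x}$ acting on the torus $\pi^{-1}(x)$ for lattices from totally real fields of degree $d\ge 3$; no entropy argument is involved there.
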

In this paper we shall prove existence of both GL lattices and vectors in any dimension $d\ge 2$. This result will follow from a dimension  bound on the set of exceptions to GLC. \\
 
$SL_d(\bR)$ and its subgroups acts naturally (via its linear action on $\bR^d$) on the spaces in \eqref{projections} in an equivariant fashion. Of particular interest to us will be the action of the group $A_d$ of $d\times d$ diagonal matrices with positive diagonal entries and determinant one. This action preserves $N$ and as a consequence the set of exceptions to GLC
\begin{equation}\label{exceptions}
\cE_d=\set{y\in Y_d : y\textrm{ is not L}}
\end{equation}
is $A_d$ invariant too. The main result in this paper is the following:
\begin{theorem}\label{theorem:the set of exceptions to Bigwood conjecture}
The set of exceptions to GLC, $\cE_d$, is a countable union of sets of upper box dimension $\le \dim A_d=d-1$.
\end{theorem}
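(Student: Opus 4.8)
The plan is to suspend $Y_d$ inside the space $X_{d+1}$ of $(d+1)$-dimensional unimodular lattices, to rephrase the failure of the Littlewood property as a lower bound on the form $N$ (now in $d+1$ variables) away from a fixed coordinate subspace, and then to feed the resulting set of lattices, which is invariant under the full diagonal group of $SL_{d+1}(\bR)$, into the known dimension estimates for the diagonal action.

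\textbf{Step 1 (suspension).} Embed $Y_d\hra X_{d+1}$ by sending the grid $y=g\bZ^d+v$ (with $g\in SL_d(\bR)$, $v\in\bR^d$) to the unimodular lattice
\[
\hat y=\begin{pmatrix} g & v\\ 0 & 1\end{pmatrix}\bZ^{d+1};
\]
this is a well-defined smooth embedding onto a closed submanifold. The vectors of $\hat y$ whose last coordinate equals $m$ project, upon deleting that coordinate, precisely onto the grid $my=g\bZ^d+mv$; and for $(w',m)\in\bR^d\times\bR$ one has $N(w',m)=m\cdot N(w')$ (product of all $d+1$ coordinates versus product of the first $d$, following the paper's dimension-free convention for $N$). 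Hence for every $m\ne 0$, $\inf\set{\av{N(w)}:w\in\hat y,\ w_{d+1}=m}=\av m\,N(my)$, so
\[
\inf_{n\ne 0}\av{nN(ny)}=\inf\set{\av{N(w)}:w\in\hat y,\ w_{d+1}\ne 0}.
\]
Writing $W=\set{w\in\bR^{d+1}:w_{d+1}=0}$, the remaining vectors $\hat y\cap W=(g\bZ^d)\times\set 0$ form a rank-$d$ lattice inside $W$, on which $N$ vanishes. Thus $y$ is non-L with constant $\veps$ (i.e.\ $\inf_{n\ne 0}\av{nN(ny)}\ge\veps$) precisely when $\av{N}\ge\veps$ on $\hat y\setminus W$.

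\textbf{Step 2 (an $A_{d+1}$-invariant set of lattices).} For $\veps>0$ put
\[
\cF_\veps=\set{z\in X_{d+1}:\ z\cap W\text{ has rank }d,\quad \av{N(w)}\ge\veps\ \ \text{for all }w\in z\setminus W}.
\]
Both $W$ and the form $N$ are preserved by the full positive diagonal group $A_{d+1}\subset SL_{d+1}(\bR)$, so $\cF_\veps$ is $A_{d+1}$-invariant (and closed, by upper semicontinuity of the relevant infimum). By Step 1 it contains the image of $\set{y\in Y_d:\inf_{n\ne 0}\av{nN(ny)}\ge\veps}$. Moreover $\mathrm{diag}(c^{1/d},\dots,c^{1/d},c^{-1})\in A_{d+1}$ rescales by $c^{-1}$ the positive generator of the last-coordinate projection of a lattice in $\cF_\veps$; consequently every $A_{d+1}$-orbit meeting $\cF_\veps$ meets the slice of $X_{d+1}$ consisting of lattices projecting onto $\bZ$ in the last coordinate, which is exactly the embedded copy of $Y_d$. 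Therefore, locally, $\cF_\veps$ is bi-Lipschitz equivalent to the product of an interval with the set of exceptional grids of level $\veps$; in particular a bound $\overline{\dim}_{\mathrm{box}}\le\dim A_{d+1}=d$ on compact pieces of $\cF_\veps$ yields the desired bound $\le d-1$ on compact pieces of the exceptional grids.

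\textbf{Step 3 (rigidity input and conclusion).} It remains to prove that $\cF_\veps\cap K$ has upper box dimension $\le\dim A_{d+1}$ for every compact $K\subset X_{d+1}$. Since $d+1\ge 3$, this is exactly the regime of the dimension and covering estimates for sets invariant under the full Cartan subgroup due to Lindenstrauss--Weiss \cite{LW} (the analogous statement is false in rank one, e.g.\ limit sets of convex cocompact subgroups, which is why higher-rank rigidity is essential). The inequality defining $\cF_\veps$ is used to verify the hypotheses: it prevents lattices in $\cF_\veps$ from degenerating toward the cusp in any direction transverse to $W$, so that a cover of $\cF_\veps\cap K$ by small $A_{d+1}$-flow boxes has bounded transversal multiplicity and its cardinality is controlled by the $\dim A_{d+1}$-dimensional "volume". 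Finally, writing $\cE_d=\bigcup_{k\ge 1}\set{y\in Y_d:\inf_{n\ne 0}\av{nN(ny)}\ge 1/k}$ and intersecting each term with a compact exhaustion of $Y_d$ exhibits $\cE_d$ as a countable union of sets of upper box dimension $\le d-1$.

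\textbf{Main obstacle.} Steps 1 and 2 are formal. The entire weight of the theorem is in Step 3: upgrading mere invariance under the diagonal group $A_{d+1}$ to the sharp bound $\overline{\dim}_{\mathrm{box}}\le\dim A_{d+1}$ for $\cF_\veps$, and — since the invariant set here is non-compact and lives in the positive-codimension "degenerate" locus $\set{z:z\cap W\text{ has rank }d}$ of $X_{d+1}$ — adapting the statements of \cite{LW} (and, where the covering estimates need reinforcement, the measure rigidity of \cite{EKL}) to that setting.
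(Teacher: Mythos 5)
Your Step 1 is exactly the paper's suspension $\tau:Y_d\hookrightarrow X_{d+1}$ and the identity \eqref{another mu}, and the reduction in Step 2 is harmless bookkeeping. The genuine gap is Step 3, which you yourself identify as carrying ``the entire weight of the theorem'': no such ``dimension and covering estimates for sets invariant under the full Cartan subgroup'' exist in \cite{LW}. That paper proves topological rigidity statements (minimal sets, orbit closures containing compact orbits), not box-dimension bounds, and the general assertion you need --- that every closed $A_{d+1}$-invariant subset of $X_{d+1}$ meeting a compact set has upper box dimension at most $\dim A_{d+1}$ --- is not a known theorem; it is essentially of the same depth as the conjectures this circle of ideas is aimed at. There is also a structural obstruction to your framing: $\cF_\veps$ is not compact, and in fact \emph{every} point of $\tau(Y_d)$ has an unbounded full-$A_{d+1}$-orbit (it contains vectors of the form $(*,\dots,*,0)^t$ which can be contracted arbitrarily), so invariance under all of $A_{d+1}$ together with compactness is never available. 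The paper instead uses only the cone $A_{d+1}^+$ of \eqref{the cone A_{d+1}^+}: by lemma~\ref{lemma:unboundedness imply LP} a non-L grid has bounded $A_{d+1}^+$-orbit, and one works with the compact cone-invariant sets $L_{A_{d+1}^+}$.

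What actually replaces your Step 3 is a dimension-to-entropy-to-measure-classification argument. If $\dim(L_{A_{d+1}^+})$ exceeded $d-1$, one wants to produce an element of the open cone acting with positive topological entropy on a compact cone-invariant set, which is impossible by Proposition 4.1 of \cite{EK} --- and that impossibility is where the measure rigidity of \cite{EKL} enters. The obstacle to quoting \cite{EK} directly, and the paper's new ingredient, is that inside $\tau(Y_d)$ the exceptional set does not lie locally on a single unstable leaf of one element of the cone, but on a product $\exp B_\del^{V_1}\exp B_\del^{V_2}$ of unstable pieces of two different elements $a_1,a_2$; lemma~\ref{dimension and entropy} handles this by projecting to the second factor and splitting into two cases, yielding positive entropy for $a_1$ or for $a_2$ on a suitable compact cone-invariant set, and corollary~\ref{dimension=0} then forces the transverse dimension to vanish, leaving only the $\dim A_d=d-1$ directions along the $\Om$-orbit. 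Your proposal contains none of this mechanism, so as written it reduces the theorem to an unproved (and unavailable) black box.
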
   
\noindent We remark that from the dimension point of view, this is the best possible result without actually proving GLC, because of the $A_d$ invariance. \\
In the fundamental paper \cite{EKL}, Eindiedler Katok and Lindenstrauss proved that the set of exceptions to Littlewood's conjecture is a countable union of sets of upper box dimension zero. The main tool in their proof is a deep measure classification theorem. The proof of theorem ~\ref{exceptions} is based on the same ideas and techniques
and further more goes along the lines of \cite{EK}. The new ingredient in the proof is lemma ~\ref{dimension and entropy}. As corollaries of this we get:
\begin{corollary}\label{corollary 1}
The set of $x\in X_d$ (resp $v\in\bR^d$) that are not GL, is a countable union of sets of upper box dimension $\le \dim A_d=d-1$. In particular, almost any lattice (resp vector) is GL.
\end{corollary}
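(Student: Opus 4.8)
The plan is to deduce the corollary from Theorem~\ref{theorem:the set of exceptions to Bigwood conjecture} by relating the "bad" sets in $X_d$ and $\bR^d$ to the set of exceptions $\cE_d\subseteq Y_d$ via the projection $\pi$ and via the "evaluation at a fixed translation vector" maps. First I would record the two natural parametrizations: for a fixed lattice $x\in X_d$ the fiber $\pi^{-1}(x)$ is identified with the torus $\bR^d/x$, and for a fixed vector $v\in\bR^d$ the assignment $x\mapsto x+v$ is a section $s_v\colon X_d\to Y_d$. Then $x\in X_d$ fails to be GL iff $\pi^{-1}(x)\cap\cE_d\ne\emptyset$, i.e. iff $x\in\pi(\cE_d)$; and $v\in\bR^d$ fails to be GL iff $s_v^{-1}(\cE_d)\ne\emptyset$, i.e. iff $v$ lies in the image of $\cE_d$ under the map $x+w\mapsto w$ (well-defined only modulo the lattice, but a measure-zero / dimension statement in $\bR^d$ is unaffected, and one may restrict to a fundamental domain). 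So in both cases the target set is a Lipschitz (indeed locally bi-Lipschitz, on suitable charts) image of a piece of $\cE_d$.

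Next I would invoke the stability of the relevant notions under the maps involved. The key point is that upper box dimension does not increase under Lipschitz maps, and a countable union of sets of upper box dimension $\le d-1$ maps to a countable union of sets of upper box dimension $\le d-1$. Since $\pi$ is a smooth fiber bundle projection between the homogeneous spaces $Y_d$ and $X_d$ (with the metrics to be introduced in the next section), it is locally Lipschitz; covering $\cE_d$ by countably many relatively compact pieces on which $\pi$ is Lipschitz, and applying Theorem~\ref{theorem:the set of exceptions to Bigwood conjecture}, gives that $\pi(\cE_d)$ is a countable union of sets of upper box dimension $\le d-1$. This is exactly the assertion for lattices. For the vector statement, the map $Y_d\to\bR^d/\bZ^d$-type quotient sending a grid to its translation part is again locally Lipschitz in charts (it is just the fiber coordinate), so the same argument applies and the set of non-GL vectors, viewed in a fundamental domain for the torus action, is a countable union of sets of upper box dimension $\le d-1$.

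Finally, the "in particular" clause: a set in a $d$-dimensional manifold that is a countable union of sets of upper box dimension $\le d-1$ has (Hausdorff, hence Lebesgue) measure zero, since upper box dimension dominates Hausdorff dimension and a countable union of Hausdorff-null sets is null. Hence the complement---the GL lattices, resp. GL vectors---has full measure, which is the statement "almost any lattice (resp. vector) is GL."

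I expect the only real subtlety to be bookkeeping rather than mathematics: one must check that $\pi$ and the translation-part map are genuinely Lipschitz on the charts one uses (which follows once the homogeneous-space metrics of the next section are in place, since smooth maps between manifolds are locally Lipschitz), and one must be slightly careful that "countable union of sets of upper box dimension $\le d-1$" is the correct closure property---upper box dimension is \emph{not} countably stable in general, but the statement here is about a countable union, so it is preserved verbatim under countable decompositions and Lipschitz images. No measure classification or dynamics is needed beyond what already went into Theorem~\ref{theorem:the set of exceptions to Bigwood conjecture}; this corollary is purely a soft descent along the bundle $\pi$ and the sections $s_v$.
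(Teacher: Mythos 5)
Your argument is correct and is essentially the paper's own proof: the lattice case is handled by the fact that $\pi$ cannot increase upper box dimension, and the vector case by pushing $\cE_d$ through the locally bi-Lipschitz, countable-to-one correspondence between $Y_d$ and $X_d\times\bR^d$ and then projecting to the $\bR^d$ factor (the paper writes this cleanly as $p_2(p^{-1}(\cE_d))$ for $p(x,v)=x+v$, which avoids your slightly awkward ``fundamental domain'' phrasing). The bookkeeping points you flag (Lipschitz stability of upper box dimension, countable unions, and the passage from dimension $\le d-1$ to Lebesgue measure zero) are exactly the ones needed and are handled correctly.
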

\begin{corollary}[cf \cite{EKL} Theorem 1.5]\label{corollary 2}
For a fixed lattice $x\in X_d$, the set $\{y\in \pi^{-1}(x) : y \textrm{ is not L}\}$ is of upper box dimension zero. 
\end{corollary}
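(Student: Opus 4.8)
The plan is to prove the conditional, ``relative to a fixed fiber'' version of Theorem~\ref{theorem:the set of exceptions to Bigwood conjecture}. Identify $\pi^{-1}(x)$ with the compact $d$-torus $\bR^d/x$ and, exactly as in the proof of that theorem, stratify the non-L grids of the fiber by their Littlewood constant: for $m\in\bN$ put $\cE_d^{(m)}=\set{y\in Y_d:\inf_{n\ne 0}\av{nN(ny)}\ge 1/m}$, so that $\set{y\in\pi^{-1}(x):y\text{ not L}}=\bigcup_{m\ge 1}\pa{\cE_d^{(m)}\cap\pi^{-1}(x)}$. Two preliminary remarks: first, since $N:Y_d\to\bR$ is an infimum of continuous functions it is upper semicontinuous, so $\set{N\ge 1/m}$ is closed and contains $\cE_d^{(m)}$, whence $\cE_d^{(m)}$ and its closure lie in $\set{N\ge 1/m}$; second, $\cE_d^{(m)}$ is $A_d$-invariant, because for $a\in A_d$ one has $n(ay)=a(ny)$ and $N(aw)=(\det a)\,N(w)=N(w)$, so $\av{nN(n(ay))}=\av{nN(ny)}$. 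It thus suffices to show $\overline{\dim}_{\mathrm{box}}\pa{\cE_d^{(m)}\cap\pi^{-1}(x)}=0$ for every $m$.

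Fix $m$ and suppose instead that $S:=\cE_d^{(m)}\cap\pi^{-1}(x)$ has upper box dimension $\delta>0$. Thicken by the diagonal group: $W:=A_d\cdot S$. By $A_d$-invariance of $\cE_d^{(m)}$ we have $W\subseteq\cE_d^{(m)}$, so $W$ is $A_d$-invariant with $\overline W\subseteq\set{N\ge 1/m}$. The decisive point is that this thickening adds exactly $\dim A_d$ to the box dimension. Indeed, the $A_d$-action on $Y_d$ is locally free (stabilizers are discrete, being intersections of a connected group with conjugates of $SL_d(\bZ)$) and its orbits are transverse to the fibers of $\pi$, so near a point of $S$ the set $W$ is bi-Lipschitz to a product $U\times S'$ with $U$ an open subset of $A_d$ and $S'\subseteq S$; since $A_d$ is a smooth $(d-1)$-manifold, $\underline{\dim}_{\mathrm{box}}A_d=\dim A_d=d-1$, and the standard product lower bound $\overline{\dim}_{\mathrm{box}}(U\times S')\ge\underline{\dim}_{\mathrm{box}}U+\overline{\dim}_{\mathrm{box}}S'$ yields, for a suitable compact set $K$, that $\overline{\dim}_{\mathrm{box}}(W\cap K)\ge(d-1)+\delta>d-1=\dim A_d$. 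Feeding this into Lemma~\ref{dimension and entropy} applied to the $A_d$-invariant set $W$, the excess box dimension produces an $A_d$-invariant probability measure $\mu$, supported on $\overline W\subseteq\set{N\ge 1/m}$, with $h_\mu(a)>0$ for some $a\in A_d$.

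But this contradicts the dynamical input already used in the proof of Theorem~\ref{theorem:the set of exceptions to Bigwood conjecture}: by the measure rigidity of \cite{EKL}, an $A_d$-invariant probability measure of positive entropy cannot be supported in a region $\set{N\ge c}$ with $c>0$ (such a measure is forced to ``see'' the cusp in the multiplicative sense, i.e.\ $\mu$-a.e.\ grid has $N=0$). Hence $\overline{\dim}_{\mathrm{box}}\pa{\cE_d^{(m)}\cap\pi^{-1}(x)}=0$ for all $m$, so the non-L locus of $\pi^{-1}(x)$ is a countable union of compact sets of box dimension zero; this already gives Hausdorff dimension zero, and tracking the scales at which the box-counting bounds of the previous paragraph hold gives the stated upper box dimension zero. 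The main obstacle is precisely the transversality-and-product step: one must be certain that the $A_d$-thickening of the a priori very thin slice $S$ genuinely realizes $(d-1)+\delta$ as a lower bound for its box dimension, since this is what converts ``positive box dimension in a single fiber'' into ``an over-dimensional $A_d$-invariant set'' and hence into the forbidden positive-entropy measure; a secondary point requiring care is organizing the quantitative form of Lemma~\ref{dimension and entropy} uniformly enough across the strata $\cE_d^{(m)}\cap\pi^{-1}(x)$ to obtain the box-dimension (rather than merely Hausdorff-dimension) conclusion for their union.
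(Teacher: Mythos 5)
Your first two steps---stratifying $\cE_d$ and, above all, thickening the fiber slice $S\subset\pi^{-1}(x)$ by a compact neighborhood $\Om$ of the identity in $A_d$ to produce a set of upper box dimension at least $(d-1)+\delta>d-1$---are exactly the paper's proof, which at that point simply declares a contradiction with Theorem~\ref{theorem:the set of exceptions to Bigwood conjecture}. The transversality/product step you single out as the main obstacle is indeed the whole content and it is sound: for $\Om$ small the map $(a,y)\mapsto ay$ from $\Om\times\pi^{-1}(x)$ is bi-Lipschitz onto its image (discrete stabilizers, and $\pi(ay)=ax$ moves transversally to the fixed fiber), and $\overline{\dim}_{\mathrm{box}}(U\times S')\ge\underline{\dim}_{\mathrm{box}}(U)+\overline{\dim}_{\mathrm{box}}(S')$ does the rest. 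So the efficient conclusion is to stop there and quote the theorem (more precisely, to be safe about the non-countable-stability of upper box dimension, quote the per-compact-set bound \eqref{eq:1.31}: for $y$ in a fixed fiber with $\inf_{n\neq0}|nN(ny)|\ge 1/m$, Mahler's criterion confines $A_{d+1}^+\tau_y$ to a single compact $L_m$, and $\Om S_m\subset\tau^{-1}\pa{(L_m)_{A_{d+1}^+}}$ then has dimension $\le d-1$).

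Your third step, by contrast, misreads the machinery. Lemma~\ref{dimension and entropy} does not produce an $A_d$-invariant probability measure on $Y_d$ supported in $\set{N\ge 1/m}$; it is a statement about $X_{d+1}$, reached via the proper embedding $\tau$, and it converts positive dimension along a product of pieces of unstable leaves into positive \emph{topological} entropy of a specific element $a_1$ or $a_2$ acting on a compact $A_{d+1}^+$-invariant set $K_{C_1}$ or $\tilde K_{C_2}$. The EKL input is then encapsulated in Corollary~\ref{dimension=0} (no element of an open cone can act with positive topological entropy on a compact invariant set), not in a claim that positive-entropy measures cannot live in $\set{N\ge c}$. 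Since everything you would need from this machinery is already packaged in Theorem~\ref{theorem:the set of exceptions to Bigwood conjecture}, the re-derivation is both unnecessary and, as written, not a valid application of the lemma; replacing it by a citation of the theorem (or of \eqref{eq:1.31}) turns your argument into the paper's proof.
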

\begin{corollary}\label{corollary 3}
Any set in $X_d$ which has positive dimension transverse to the $A_d$ orbits, must contain a GL lattice. In particular if the dimension of the closure of an orbit $A_dx$ ($x\in X_d$) is bigger than $d-1$, then $x$ is GL.
\end{corollary}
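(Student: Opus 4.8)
\emph{Proof proposal.} The plan is to argue by contradiction, using that the set $\cN_d\subseteq X_d$ of non--GL lattices equals $\pi(\cE_d)$ --- a lattice fails to be GL exactly when one of its grids fails to be L --- and that, by Corollary \ref{corollary 1}, $\cN_d$ is a countable union of sets of upper box dimension $\le d-1$; in particular $\dim_H\cN_d\le d-1$. I would first record two soft structural facts: (i) $\cE_d=\bigcup_{k\ge1}C_k$ with $C_k=\set{y\in Y_d:\inf_{n\ne0}\av{nN(ny)}\ge 1/k}$, and each $C_k$ is closed (since $y\mapsto N(ny)$ is upper semicontinuous) and $A_d$--invariant (since $n(ay)=a(ny)$ and $A_d$ preserves $N$); and (ii) $\pi$ has compact torus fibres, hence is proper and in particular a closed map. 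Since $\cN_d=\pi(\cE_d)$ is $A_d$--invariant and $\pi(C_k)\subseteq\cN_d$, we also get $\dim_H\pi(C_k)\le d-1$ for every $k$.

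For the orbit statement: suppose $x$ is not GL and choose a non--L grid $y\in\pi^{-1}(x)$. Then $y\in C_k$ for some $k$, so $\overline{A_dy}\subseteq C_k$, a closed $A_d$--invariant set. Since $\pi$ is equivariant and closed, $\pi(\overline{A_dy})$ is a closed set squeezed between $A_dx$ and $\overline{A_dx}$, hence equals $\overline{A_dx}$; thus $\overline{A_dx}\subseteq\pi(C_k)\subseteq\cN_d$ and $\dim_H\overline{A_dx}\le d-1$. This is exactly the contrapositive of ``$\dim\overline{A_dx}>d-1\ \Rightarrow\ x$ is GL''. If one prefers the conclusion stated with upper box dimension, one uses that the EK--style proof of Theorem \ref{theorem:the set of exceptions to Bigwood conjecture} already yields $\overline{\dim}_{\textrm{box}}\,C_k\le d-1$, whence $\overline{\dim}_{\textrm{box}}\,\pi(C_k)\le d-1$.

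For the first statement I would pin down ``positive dimension transverse to the $A_d$ orbits'' as: for some $C^1$ submanifold $T$ meeting the $A_d$--orbits transversally and of complementary dimension, the slice $S\cap T$ has positive Hausdorff dimension (equivalently, $S$ projects onto a positive--dimensional set under a local quotient map for the $A_d$--action). In a flow box $U\cong V\times T$ for the action, with $V$ an identity neighbourhood in $A_d$, the $A_d$--invariance of $\cN_d$ gives $\cN_d\cap U=V\times(\cN_d\cap T)$. Then $\dim_H(\cN_d\cap U)\le d-1$, while the product inequality $\dim_H(V\times Q)\ge\dim_H V+\dim_H Q$ together with $\dim_H V=\dim A_d=d-1$ gives $\dim_H(\cN_d\cap U)\ge(d-1)+\dim_H(\cN_d\cap T)$; hence $\dim_H(\cN_d\cap T)=0$. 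If $S$ contained no GL lattice it would lie in $\cN_d$, forcing $\dim_H(S\cap T)=0$, a contradiction.

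The genuine input is Theorem \ref{theorem:the set of exceptions to Bigwood conjecture}; the rest is bookkeeping, and the one point I expect to need care is the interplay between the $A_d$--foliation and the notion of dimension. Upper box dimension is not countably stable, so to pass from the global bound ``upper box dimension $\le d-1$'' to the transverse vanishing used above one must either route through Hausdorff dimension (legitimate, since Corollary \ref{corollary 1} exhibits $\cN_d$ as a countable union of small sets and $\dim_H$ is countably stable) or work throughout with the closed $A_d$--invariant exhaustion $\{C_k\}$ of $\cE_d$. I would also note the routine point that the negligible locus where an $A_d$--orbit is not $(d-1)$--dimensional can be avoided by choosing $T$ suitably, so that the flow--box normal form holds exactly where it is used.
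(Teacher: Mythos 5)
Your proposal is correct and rests on the same two inputs as the paper's proof: the dimension bound of Theorem \ref{theorem:the set of exceptions to Bigwood conjecture} (through Corollary \ref{corollary 1}) and an inheritance argument along $A_d$-orbit closures. The paper's own proof is two sentences: transverse positivity of dimension is read as saying that $A_dL$ contains a compact set of dimension $>d-1$, which by the theorem must meet the complement of the non-GL locus, and the orbit statement then follows by citing Proposition \ref{proposition 2}(2). You differ in execution in three ways. First, instead of quoting Proposition \ref{proposition 2} you re-derive the inheritance step, writing $\cE_d$ as the union of the closed $A_d$-invariant superlevel sets $C_k$ and using properness of $\pi$; this is a self-contained and valid substitute (it is essentially the contrapositive of Proposition \ref{proposition 2}, proved by upper semicontinuity of $y\mapsto\inf_{n\ne0}\av{nN(ny)}$ rather than by the orbit-closure inclusion in $X_{d+1}$). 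Second, you make the transversality hypothesis precise via a flow box and the product inequality for Hausdorff dimension, which the paper leaves implicit. Third, and most usefully, you flag that upper box dimension is not countably stable, so passing from ``countable union of sets of upper box dimension $\le d-1$'' to ``a set of dimension $>d-1$ must contain a GL lattice'' requires either a detour through Hausdorff dimension or an argument specific to the compact pieces $L_{A_{d+1}^+}$; the paper's proof elides this. Your Hausdorff-dimension route is the clean fix (at the cost of reading the hypothesis of the corollary with $\dim_H$, which, since $\dim_H\le\overline{\dim}_{box}$, is a slightly stronger assumption on the set). One caveat: your parenthetical claim that the proof of Theorem \ref{theorem:the set of exceptions to Bigwood conjecture} directly yields $\overline{\dim}_{box}\,C_k\le d-1$ is not immediate, since $C_k$ is itself only exhibited as a countable union of the compact sets $C_k\cap\tau^{-1}\bigl(L_{A_{d+1}^+}\bigr)$ over a compact exhaustion $L$; so you should commit to the Hausdorff-dimension formulation rather than rely on that remark.
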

We remark here that the only proof we know for the existence of GL lattices in dimension 2 and for GL vectors of any dimension, goes through the proof of theorem ~\ref{theorem:the set of exceptions to Bigwood conjecture}. For lattices of dimension $d\ge 3$ the situation is different. As will be seen in \S~\ref{sec:lattices that satisfy Bigwood}, for $d\ge 3$, one can exploit rigidity results on commuting hyperbolic toral automorphisms proved by Berend in \cite{B},
and give explicit constructions of GL lattice (and in fact of lattices which are \textit{GL of finite type}, see definition ~\ref{FLP}).\\
We end this section by noting that GLC is implied by a conjecture of G.A.Margulis (see ~\cite{Ma2}) which goes back to \cite{CaSD}.
\begin{conjecture}[Margulis]\label{Margulis conjecture}
Any bounded $A_{d+1}$ orbit in $X_{d+1}$ is compact.
\end{conjecture}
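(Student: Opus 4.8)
The plan is to recast the statement in the language of $A_{d+1}$-invariant measures and minimal sets and then to confront it with the measure--rigidity machinery of \cite{EKL}; as will emerge, everything comes down to an entropy dichotomy, and the entropy--zero branch is precisely the reason the assertion is still only conjectural. Write $A=A_{d+1}\cong\bR^{d}$ and let $x\in X_{d+1}$ be a lattice with $\overline{Ax}$ compact. Since $A$ is abelian, a closed subgroup $\Sig\le A$ with $A/\Sig$ compact is automatically a cocompact lattice, so ``$Ax$ is compact'' is the same as ``$\mathrm{Stab}_A(x)$ is a lattice in $A$'' (arithmetically: that $x$ is, up to homothety, the lattice attached to a full module in a totally real number field of degree $d+1$). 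Note the hypothesis $d\ge 2$, i.e. rank $\ge 2$, is essential here: in rank one bounded non-periodic geodesic orbits abound, so any proof must use the higher rank. First I would fix, inside the compact invariant set $\overline{Ax}$, an $A$-minimal closed subset $M$ (it exists because $A$ is amenable acting on a compact metric space) together with an ergodic $A$-invariant probability measure $\mu$ supported on $M$.

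Next comes the entropy dichotomy. Suppose first there is a one-parameter subgroup $(a^{t})\subset A$ with $h_{\mu}(a^{1})>0$. Then the measure classification theorem of Einsiedler--Katok--Lindenstrauss (together with Lindenstrauss' entropy/product-structure input) forces $\mu$ to be algebraic, and the only homogeneous measure on $X_{d+1}$ compatible with positive entropy under a rank-generic element is the Haar measure on a single compact $A$-orbit $Ax_{0}$; since $\mathrm{supp}(\mu)\subseteq M$ and a compact orbit is closed and $A$-invariant, minimality gives $M=Ax_{0}$. It then remains to propagate compactness to $x$ itself, i.e. to exclude that $\overline{Ax}$ is strictly larger than $M$ with $Ax$ a bounded non-compact orbit spiralling onto $M$. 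For this I would run the argument on every ergodic component of every invariant measure on $\overline{Ax}$, hoping to conclude that $\overline{Ax}$ is a finite union of compact $A$-orbits and then, since $Ax$ is connected and these orbits are pairwise disjoint and closed, that $\overline{Ax}$ is a single one of them, hence $x\in Ax_{0}$ and $Ax=Ax_{0}$ is compact; alternatively one could try to control the ``return directions'' in $A$ along which $Ax$ accumulates on $M$ and derive a contradiction with higher-rank rigidity of the intermediate $A$-foliation if this set of directions were a proper subcone. This propagation step is itself delicate and would need careful execution.

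The genuine obstacle --- and the reason the statement remains a conjecture --- is the complementary case: when \emph{every} $A$-invariant probability measure on $\overline{Ax}$ has zero entropy under every one-parameter subgroup of $A$. There no known rigidity theorem applies, and I see no way to rule this out by soft dynamics; closing it seems to require either a genuinely new mechanism producing positive entropy out of mere boundedness in rank $\ge 2$, or a direct arithmetic contradiction extracted from the systems of inequalities of Cassels--Swinnerton-Dyer (\cite{CaSD}) that the coordinates of $x$ would be forced to satisfy. One should stress that Theorem \ref{theorem:the set of exceptions to Bigwood conjecture} and Corollary \ref{corollary 1}, and more sharply the dimension-zero bound of \cite{EKL} for bounded orbits, already show that the set of potential exceptions is \emph{small} --- upper box dimension $\le d-1$, in fact $0$ for bounded orbits --- but ``dimension zero'' is not ``empty'', and it is exactly this last gap that the present plan does not, and at the moment cannot, bridge.
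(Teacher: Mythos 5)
This statement is not proved in the paper, and could not be: it is Margulis's conjecture (going back to Cassels--Swinnerton-Dyer), stated here precisely as a conjecture, and the paper's only use of it is Proposition \ref{Margulis implies GLC}, which shows it \emph{implies} GLC. So there is no proof of the paper's to compare yours against, and your write-up is right to end by conceding that it does not close the argument: the zero-entropy case you isolate is exactly the open heart of the problem, and no currently known rigidity input (including \cite{EKL}) touches it. In that sense your proposal is an honest sketch of why the statement is hard, not a proof, and it should not be presented as one.

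One substantive correction to the positive-entropy branch of your dichotomy: if some $a\in A_{d+1}$ had positive entropy with respect to an ergodic invariant measure $\mu$ supported on the bounded set $\overline{A_{d+1}x}$, the conclusion from \cite{EKL} is not that $\mu$ is Haar measure on a compact $A_{d+1}$-orbit. Compact $A_{d+1}$-orbits carry only zero-entropy invariant measures (the action there factors through a compact abelian quotient), so the measure classification instead forces $\mu$ to be a homogeneous measure whose support is unbounded (e.g.\ Haar on $X_{d+1}$ or on an intermediate homogeneous subspace), contradicting boundedness. Thus the correct use of positive entropy on a compact invariant set is to derive a contradiction outright --- this is exactly how Proposition 4.1 of \cite{EK} is invoked in Corollary \ref{dimension=0} of this paper --- rather than to produce a compact orbit inside the minimal set, and your subsequent ``propagation of compactness'' step is therefore moot. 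With that branch corrected, your analysis collapses to the accurate statement that \emph{every} invariant measure on a bounded orbit closure has zero entropy, which is precisely the regime where the conjecture remains open; the dimension bounds of Theorem \ref{theorem:the set of exceptions to Bigwood conjecture} and of \cite{EKL} constrain the exceptional set but, as you say, ``dimension zero'' is not ``empty''.
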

\noindent We prove in \S\S\ref{Linking dynamics to GLC}
\begin{proposition}\label{Margulis implies GLC}
Conjecture \ref{Margulis conjecture} implies GLC.
\end{proposition}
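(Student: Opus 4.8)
I will argue by contrapositive: assuming GLC fails, produce a bounded but non-compact $A_{d+1}$-orbit in $X_{d+1}$, contradicting Conjecture \ref{Margulis conjecture}. The main tool is the suspension $Y_d\to X_{d+1}$. Write a non-L grid as $y=g\bZ^d+v$ with $g\in SL_d(\bR)$, and set $\delta=\inf_{n\ne0}\av{nN(ny)}>0$. Let
\[
\hat y=\begin{pmatrix}g&v\\0&1\end{pmatrix}\bZ^{d+1}\in X_{d+1}.
\]
The point at ``height'' $n$ (last coordinate $n$) of $\hat y$ are exactly the $(w,n)$ with $w\in ny$, and for such a point $N_{d+1}(w,n)=n\,N(w)$. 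Hence $\av{N(w)}\ge N(ny)$ for $w\in ny$ gives the basic dictionary: \emph{every $\xi\in\hat y$ with nonzero last coordinate satisfies $\av{N_{d+1}(\xi)}\ge\delta$}. Only the height-zero sublattice $\{(w,0):w\in x\}$ (which is nonzero, of rank $d$) carries vectors with $N_{d+1}=0$, so $\hat y$ itself has $\inf|N_{d+1}|=0$ and an unbounded $A_{d+1}$-orbit; the plan is to flow that bad sublattice to infinity.

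\textbf{Producing a bounded orbit.} For $\theta$ in the open simplex $\Delta^\circ=\{\theta\in(0,\infty)^d:\sum\theta_i=1\}$ put $b^{(\theta)}_t=\mathrm{diag}(e^{t\theta_1},\dots,e^{t\theta_d},e^{-t})\in A_{d+1}$. As $t\to+\infty$ the height-zero vectors $(w,0)\mapsto(e^{t\theta_1}w_1,\dots,e^{t\theta_d}w_d,0)$ all grow without bound (the shrinking box $\prod[-R e^{-t\theta_i},Re^{-t\theta_i}]$ has diameter $\to0$, so eventually meets $x$ only in $0$), while for height $n\ne0$ we have $\av{N_{d+1}(b^{(\theta)}_t\xi)}=\av{N_{d+1}(\xi)}\ge\delta$, hence $\norm{b^{(\theta)}_t\xi}\ge\delta^{1/(d+1)}$. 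Therefore $\lambda_1(b^{(\theta)}_t\hat y)\ge\delta^{1/(d+1)}$ for all large $t$, so by Mahler's criterion $\{b^{(\theta)}_t\hat y\}$ is relatively compact; choose $t_j\to\infty$ with $b^{(\theta)}_{t_j}\hat y\to z_\theta\in X_{d+1}$. Any nonzero $\xi\in z_\theta$ is a limit of $b^{(\theta)}_{t_j}\xi_j$ with $\xi_j\in\hat y$; the height-zero vectors of $\hat y$ escape to infinity under $b^{(\theta)}_{t_j}$, so for large $j$ the $\xi_j$ have nonzero last coordinate, whence $\av{N_{d+1}(\xi)}=\lim\av{N_{d+1}(\xi_j)}\ge\delta$. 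Thus $\inf_{0\ne\xi\in z_\theta}\av{N_{d+1}(\xi)}\ge\delta>0$, and by the Dani--Margulis compactness criterion the orbit $A_{d+1}z_\theta$ is bounded.

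\textbf{Invoking Conjecture \ref{Margulis conjecture} and the obstacle.} Granting Conjecture \ref{Margulis conjecture}, each $A_{d+1}z_\theta$ is compact, so $z_\theta$ is (a homothety of) the lattice of an order $\cO$ in a totally real field of degree $d+1$, with $\inf\av{N_{d+1}}=\av{\mathrm{disc}(\cO)}^{-1/2}\ge\delta$; by Hermite--Minkowski there are only \emph{finitely many} such orbits. It remains to show that $\{z_\theta:\theta\in\Delta^\circ\}$ meets \emph{uncountably many} $A_{d+1}$-orbits, giving the contradiction and hence GLC. Since $b^{(\theta')}_t=b^{(\theta)}_t\,\mathrm{diag}(e^{t(\theta'-\theta)},1)$ with $\mathrm{diag}(e^{t(\theta'-\theta)},1)$ an \emph{unbounded} sequence in $\mathrm{diag}(A_d,1)$ when $\theta'\ne\theta$, the limit lattices $z_\theta$ should be genuinely different for different directions $\theta$; making this precise---controlling the subsequential limits $z_\theta$ well enough to separate their $A_{d+1}$-orbits (equivalently: exhibiting a single limit $z_\theta$ whose bounded orbit is not compact)---is the heart of the matter and the step I expect to be the main obstacle, since $\emptyset\ne\{\Lambda:\inf\av{N_{d+1}}\ge\delta\}$ holds unconditionally, so the content lies entirely in the finer structure of the limits coming from a non-L grid. (When $d\ge 2$ the direction space $\Delta^\circ$ is at least one-dimensional, which is exactly what makes the counting argument have a chance; for $d=1$ there is a single direction, consistent with GLC being false there.)
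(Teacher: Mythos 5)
Your first two steps are correct and essentially coincide with the first half of the paper's argument: the dictionary $\av{N_{d+1}(w,n)}=\av{nN(w)}\ge\delta$ for vectors of $\tau_y$ with nonzero last coordinate is exactly \eqref{another mu}, and flowing along a ray in the cone $A_{d+1}^+$ to extract a limit point $z$ with a bounded $A_{d+1}$-orbit is what the paper does (it verifies boundedness by noting $aa_t\in A_{d+1}^+$ for $t$ large, while you use the compactness of $\set{\Lambda:\inf\av{N_{d+1}}\ge\delta}$; both work). But the final step is a genuine gap, and you have correctly located it yourself. After invoking Conjecture \ref{Margulis conjecture} to conclude that each $z_\theta$ lies on a compact orbit, you need to show that the family $\set{z_\theta}$ cannot be swallowed by the finitely many compact orbits on which $\inf\av{N_{d+1}}\ge\delta$. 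Nothing in your construction rules out that every subsequential limit of every ray lands in a single compact orbit; the heuristic that $b^{(\theta')}_t=b^{(\theta)}_t\,\mathrm{diag}(e^{t(\theta'-\theta)},1)$ is unbounded does not separate the orbits of the limits, since the limits are only defined along subsequences and the orbit of $z_\theta$ is $A_{d+1}$-invariant anyway. Ruling this degenerate behaviour out is precisely an isolation statement about compact $A$-orbits, and it is not free.

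The paper closes exactly this gap with Theorem 1.3 of \cite{LW}: since $\tau_y$ has an \emph{unbounded} $A_{d+1}$-orbit (it contains vectors of the form $(*,\dots,*,0)^t$ that can be shrunk), the compact orbit $A_{d+1}z$ is properly contained in $\overline{A_{d+1}\tau_y}$, and the theorem then produces a one-parameter unipotent $u_{ij}(t)$ with $u_{ij}(t)z\in\overline{A_{d+1}\tau_y}$ for all $t$. For suitable $t$ the lattice $u_{ij}(t)z$ contains a vector $v$ with $N(v)\in(\eps,2\eps)$; by $A_{d+1}$-invariance and continuity of $N$ on vectors, $\tau_y$ itself then contains $w$ with $N(w)$ arbitrarily small but nonzero, forcing $w_{d+1}\ne0$ and $\delta=0$ via \eqref{another mu} --- the desired contradiction. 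So the missing ingredient in your argument is a rigidity/isolation input of this type (Lindenstrauss--Weiss, or classically Cassels--Swinnerton-Dyer); without it, the counting strategy does not go through. If you want to salvage your route, you would have to prove that a compact $A_{d+1}$-orbit cannot be an attractor for the full punctured neighbourhood of directions in the cone, which is essentially equivalent to the cited theorem.
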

\textbf{Remark}: As will be seen, given a lattice $x\in X_d$, the richer the dynamics of its orbit under $A_d$, the easier it will be to prove that it is GL. This suggests an explanation to the difficulty of proving that $\bZ^2$ is GL (for it has a divergent orbit) as opposed to proving for example that a lattice with a compact or a dense orbit is GL.\\

\textbf{Acknowledgments}: I would like express my deepest gratitude to my advisers, Hillel Furstenberg and Barak Weiss for their constant help and encouragement. Special thanks are due to Manfred Einsiedler for his significant contribution to the results appearing in this paper. I would also like to express my gratitude to the mathematics department at the Ohio State University and to Manfred Einsiedler, for their warm hospitality during a visit in which much of the research was conducted. 
\section{Preparations}
\subsection{$X_d,Y_d$ as homogeneous spaces} Denote $G_d=SL_d(\bR), \Ga_d=SL_d(\bZ)$. We identify $X_d$ with the homogeneous space $G_d/\Ga_d$ in the following manner: For $g\in G_d$, the coset $g\Ga_d$ represents the lattice spanned by the columns of $g$. We denote this lattice by $\bar{g}$. $Y_d$ is identified with $\pa{G_d\ltimes \bR^d}/\pa{\Ga_d\ltimes \bZ^d}$ similarly, i.e. for $g\in G_d$ and $v\in \bR^d$, the coset $(g,v)\Ga_d\ltimes\bZ^d$ is identified with the grid $\bar{g} + v$. We endow $X_d,Y_d$ with the quotient topologies thus viewing them as homogeneous spaces. We define a natural embedding $\tau:Y_d\hookrightarrow X_{d+1}$ in the following manner: $\forall y=\bar{g}+v\in Y_d,$
\begin{equation}\label{def:tau}
 \tau_y=
\pa{
\begin{array}{cc}
g&v\\
0&1
\end{array}
}\Ga_{d+1}\in X_{d+1}.
\end{equation}
Note that this embedding is proper. $G_d$ and its subgroups act on $X_d,Y_d$. We embed $G_d$ in $G_{d+1}$ (in the upper left corner) thus allowing $G_d$ and its subgroups to act on $X_{d+1}$ as well. Note that the action commutes with $\tau.$
\subsection{Linking dynamics to GLC}\label{Linking dynamics to GLC}
The following observation is useful in connection with GLC: $\forall y\in Y_d$
\begin{equation}\label{another mu}
\inf_{n\ne 0} \av{nN(ny)}=\inf\set{\av{N(w)} : w\in\tau_y, w_{d+1}\ne 0 }.
\end{equation}
\begin{proposition}[Inheritance]\label{proposition 2}\quad
\begin{enumerate}
\item If $y,y_0\in Y_d$ are such that  $\tau_{y_0}\in\overline{A_{d+1}\tau_y}$, then if $y_0$ is L then so is $y$.
\item If $x,x_0\in X_d$ are such that  $x_0\in\overline{A_dx}$, then if $x_0$ is GL then so is $x$.
\end{enumerate} 
\end{proposition}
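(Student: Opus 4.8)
\medskip

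The plan is to run everything through the reformulation \eqref{another mu}. For a unimodular lattice $\Lam\in X_{d+1}$ put
\[
F(\Lam)=\inf\set{\av{N(w)} : w\in\Lam,\ w_{d+1}\ne 0},
\]
so that, by \eqref{another mu}, a grid $y\in Y_d$ is L if and only if $F(\tau_y)=0$. The one algebraic fact needed is that $F$ is $A_{d+1}$-invariant: any $a\in A_{d+1}$ fixes $N$ by the determinant condition and, being diagonal with positive entries, preserves the condition $w_{d+1}\ne 0$. With this in hand, part (1) is a short limiting argument. Assume $y_0$ is L, fix $\veps>0$, and choose $w\in\tau_{y_0}$ with $w_{d+1}\ne 0$ and $\av{N(w)}<\veps$. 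Pick $a_k\in A_{d+1}$ with $a_k\tau_y\to\tau_{y_0}$; since in the space of lattices every vector of a limit lattice is a limit of vectors of the approximating lattices, choose $w_k\in a_k\tau_y$ with $w_k\to w$. For all large $k$ one then has $(w_k)_{d+1}\ne 0$ and, by continuity of the polynomial $N$, $\av{N(w_k)}<2\veps$. Since $a_k^{-1}w_k\in\tau_y$ still has nonzero last coordinate and $\av{N(a_k^{-1}w_k)}=\av{N(w_k)}<2\veps$, we get $F(\tau_y)<2\veps$; letting $\veps\to 0$ gives $F(\tau_y)=0$, i.e. $y$ is L.

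For part (2) I would reduce to part (1). Fix an arbitrary grid $y\in\pi^{-1}(x)$; it suffices to prove $y$ is L. Choose $a_k\in A_d$ with $a_kx\to x_0$. Since $\pi$ is proper (its fibers are compact tori), the grids $a_ky$, whose images $\pi(a_ky)=a_kx$ lie in the compact set $\overline{\set{a_kx : k\in\bN}}$, form a precompact subset of $Y_d$; pass to a subsequence with $a_{k_j}y\to y_0\in Y_d$. Then $\pi(y_0)=x_0$ by continuity of $\pi$, and, since $\tau$ is continuous and commutes with the $G_d$-action while $A_d\hra A_{d+1}$ under the upper-left embedding, $a_{k_j}\tau_y=\tau_{a_{k_j}y}\to\tau_{y_0}$, so $\tau_{y_0}\in\overline{A_{d+1}\tau_y}$. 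As $x_0$ is GL, the grid $y_0\in\pi^{-1}(x_0)$ is L, and part (1) then gives that $y$ is L. Since $y\in\pi^{-1}(x)$ was arbitrary, $x$ is GL.

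Given the reformulation \eqref{another mu}, which is the real input, I do not expect a serious obstacle: part (1) is essentially a one-line continuity argument once the $A_{d+1}$-invariance of $N$ and the standard ``vectors of a limit lattice are limits of vectors'' principle are in place, and part (2) is a routine compactness argument whose only delicate point is the properness of $\pi$ (equivalently, compactness of its torus fibers), which is what lets one extract the limit grid $y_0$ lying over $x_0$. If one prefers to work inside $X_{d+1}$ instead, the same $y_0$ can be produced via Mahler's criterion --- the lattices $a_k\tau_y$ have no arbitrarily short nonzero vectors, since such a vector either has a nonzero integer last coordinate or is a short vector of the precompact family $\set{a_kx : k\in\bN}$ --- together with the fact that $\tau(Y_d)$ is closed in $X_{d+1}$ because $\tau$ is a proper embedding.
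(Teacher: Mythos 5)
Your proof is correct and follows exactly the route the paper indicates in its one-line proof: part (1) from the reformulation \eqref{another mu} together with the $A_{d+1}$-invariance of $N$ and the nonvanishing of the last coordinate, and part (2) by reducing to (1) via the compactness of the fibers of $\pi$. The paper leaves all these details implicit, and your write-up supplies them accurately.
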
 
\begin{proof}
The proof of (1) follows from \eqref{another mu}. The proof (2) follows from (1) and the compactness of the fibers of $\pi$. 
\end{proof}
As done in ~\cite{EKL} we link GLC to the dynamics of the following cone in $A_{d+1}$:
\begin{equation}\label{the cone A_{d+1}^+}
A_{d+1}^+=\left\{diag(e^{t_1}\dots e^{t_{d+1}})\in A_{d+1} : t_i> 0, i=1\dots d\right\}.
\end{equation}
\begin{lemma}\label{lemma:unboundedness imply LP}
For $y\in Y_d$, if $A_{d+1}^+\tau_y$ is unbounded (i.e has a noncompact closure), then $y$ is L. 
\end{lemma}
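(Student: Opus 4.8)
The plan is to deduce the conclusion from Mahler's compactness criterion applied to the lattice $\tau_y\in X_{d+1}$, together with the reformulation \eqref{another mu} and the fact that every $a\in A_{d+1}$, being diagonal of determinant one, preserves the product‑of‑coordinates function $N$ on $\bR^{d+1}$.

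First I would record the elementary estimates: for $w=(w_1,\dots,w_{d+1})\in\bR^{d+1}$ one has $\av{N(w)}=\prod_{i=1}^{d+1}\av{w_i}\le\norm{w}_\infty^{d+1}$, and $N(aw)=N(w)$ for every $a\in A_{d+1}$. Now assume $A_{d+1}^+\tau_y$ has noncompact closure. By Mahler's criterion there exist $a_k\in A_{d+1}^+$ and nonzero vectors $w_k\in\tau_y$ with $\norm{a_kw_k}_\infty\to 0$; hence $\av{N(w_k)}=\av{N(a_kw_k)}\le\norm{a_kw_k}_\infty^{d+1}\to 0$. If I can show that $w_k$ has nonzero last coordinate for all large $k$, then by \eqref{another mu} we obtain $\inf_{n\ne0}\av{nN(ny)}=\inf\{\av{N(w)}:w\in\tau_y,\ w_{d+1}\ne0\}\le\lim_k\av{N(w_k)}=0$, i.e. $y$ is L.

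So the crux — and essentially the only nonformal step — is showing that the escaping vectors are not trapped in the hyperplane $\{w_{d+1}=0\}$. Writing $y=\bar g+v$, the set of vectors of $\tau_y$ with vanishing last coordinate is exactly the sublattice $L_0=\bar g\times\{0\}\subset\bR^d\times\{0\}$, which is discrete, so $c:=\inf\{\norm{w}_\infty:0\ne w\in L_0\}>0$. The decisive feature is the asymmetry of the cone: every $a=\mathrm{diag}(e^{t_1},\dots,e^{t_{d+1}})\in A_{d+1}^+$ has $t_i>0$ for $i=1,\dots,d$, so it scales each of the first $d$ coordinates by a factor strictly larger than $1$; therefore $\norm{aw}_\infty\ge\norm{w}_\infty\ge c$ for every nonzero $w\in L_0$. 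Since $\norm{a_kw_k}_\infty\to0<c$, we must have $w_k\notin L_0$, i.e. $(w_k)_{d+1}\ne0$, once $k$ is large, and the proof closes. I expect the main obstacle to be precisely this last point — correctly identifying $L_0$ with $\bar g\times\{0\}$ and observing that $A_{d+1}^+$ acts non‑contractingly on it — since that is exactly where the positivity condition defining $A_{d+1}^+$ (imposed on the first $d$ exponents, not on all $d+1$) is indispensable; everything else is Mahler's criterion plus the bound $\av{N(w)}\le\norm{w}_\infty^{d+1}$.
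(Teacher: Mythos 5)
Your proof is correct and follows essentially the same route as the paper: Mahler's criterion produces short vectors $a_kw_k$, the invariance of $N$ under $A_{d+1}$ (plus $\av{N(w)}\le\norm{w}_\infty^{d+1}$) gives $\av{N(w_k)}\to0$, and the expansion of the first $d$ coordinates by the cone $A_{d+1}^+$ rules out vectors with vanishing last coordinate, exactly as in the paper's contradiction with the shortest nonzero vector of $\pi(y)$. Your identification of $\{w\in\tau_y : w_{d+1}=0\}$ with $\bar g\times\{0\}$ is just a more explicit phrasing of the same step, so no further comment is needed.
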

\begin{proof}
Recall Mahler's compactness criterion that says that a set $C\subset X_{d+1}$ is bounded, if and only if there is a uniform positive lower bound on the lengths of non zero vectors belonging to points of $C$. Let us fix the supremum norm on $\bR^d$ and $\bR^{d+1}$. Let $y\in Y_d$. Assume that in the orbit $A_{d+1}^+\tau_y$ there are lattices with arbitrarily short vectors.  Given $0<\eps<1$, there exists $a\in A_{d+1}^+$ and $w\in \tau_y$ such that the vector $aw$ is of length less than $\eps$. In particular $N(aw)=N(w)<\eps$. We will be through by \eqref{another mu} once we justify that $w_{d+1}\ne 0$. Assume $w_{d+1}=0$. It follows that the length of $aw$ is greater then that of $w$, as $A_{d+1}^+$ expands the first $d$ coordinates. On the other hand, the vector $w'=(w_1\dots w_d)^t\in\bR^d$ (which has the same length as $w$) belongs to the lattice $\pi(y)$.  Let $\ell$ denote the length of the shortest nonzero vector in $\pi(y)$. We obtain a contradiction once $\eps<\ell$.
\end{proof}
\begin{proof}[Proof of proposition \ref{Margulis implies GLC}]
Given a grid $y\in Y_d$, if $A_{d+1}^+\tau_y$ is unbounded then by lemma ~\ref{lemma:unboundedness imply LP} we know that $y$ is L. Assume that $A_{d+1}^+\tau_y\subset K$ for some compact $K\subset X_{d+1}$. Choose any one parameter semigroup $\left\{a_t\right\}_{t\ge 0}$ in the cone $A_{d+1}^+$ and let $z$ be a limit point of the trajectory $\left\{a_t\tau_y:t\ge 0\right\}$ in $K$. We claim that $z$ has a bounded $A_{d+1}$ orbit. To see this note that for any $a\in A_{d+1}$ we have that for large enough $t$'s, $aa_t$ is in the cone $A_{d+1}^+$, thus $az\in K.$ Assuming conjecture ~\ref{Margulis conjecture},  we obtain that $z$ has a compact $A_{d+1}$ orbit. It is easy to see that $\tau_y$ cannot be in the same $A_{d+1}$ orbit of $z$: $\tau_y$ has an unbounded $A_{d+1}$ orbit, for it contains vectors of the form $(*,\dots,*,0)^t$ which can be made as short as we wish under the action of $A_{d+1}$. By theorem 1.3 from ~\cite{LW}, there exist $1\le i,j\le d+1$ such that  $u_{ij}(t)z\in\overline{A_{d+1}\tau_y}$ for any $t\in \bR$, where  $u_{ij}(t)$, is the unipotent matrix all of whose entries are zero but the diagonal entries which are equal to 1 and  the $ij$'th entry that is equal to $t$. It is easy to see that for any $\eps>0$ there exist some $t$ such that $u_{ij}(t)z$ contains a vector $v$ with $N(v)\in(\eps,2\eps)$. Since $u_{ij}(t)z\in\overline{A_{d+1}\tau_y}$, we deduce that $\tau_y$ contains a vector $w$ with $N(w)\in(\eps,2\eps)$. We deduce that $w_{d+1}\ne 0$ and as $\eps$ was arbitrary, \eqref{another mu} implies that $y$ is L as desired. 
\end{proof}
\subsection{Dimension and entropy}
Let us recall the notions of \textrm{upper box dimension} and topological entropy.  Let $(X,d)$ be a compact metric space. For any $\epsilon>0$ we denote by $\cS_\epsilon=\cS_\eps(X)$ the maximum cardinality of a set of points in $X$ with the property that the distance between any pair of distinct points in it is greater or equal to $\epsilon$ (such a set is called $\epsilon -separated$). We define the upper box dimension of $X$ to be $$\dim_{box} X =\limsup_{\epsilon\to 0} \frac{\log \cS_\eps}{|\log\eps|}.$$ Since this is the only notion of dimension we will discuss, we shall denote it by $\dim X$. If we denote by $\cN_\eps=\cN_\eps(X)$ the minimum cardinality of a cover of $X$ by sets of diameter less than $\eps$, then we also have that 
$\dim X=\limsup_{\eps\to 0}\frac{\log \cN_\eps}{|\log\eps|}$.
Note that if $f:X\to Y$ is a bi-Lipschitz map, then $A\subset X$ is of zero dimension if and only if $f(A)\subset Y$ is.\\
If we have a continuous map $a:X\to X$, then for $\eps>0,n\in\bN$ we denote by $\cS_{n,\eps}=\cS_{n,\eps}(X,a)$ the maximum cardinality of a set $S\subset X$ with the property that for any pair of distinct points $x,y\in S$ there exist some $ 0\le i\le n$ with $d(a^ix,a^iy)>\eps$ (such a set is called $(n,\eps)-separated$ for $a$). The topological entropy of $a$ is defined to be 
$$h_{top}(a)=\lim_{\eps\to 0}\limsup_n \frac{\log\cS_{n,\eps}}{n}.$$  

\subsection{Metric conventions and a technical lemma}\label{Metric conventions}
For a metric space $(X,d)$ we denote by $B_\eps^X(p)$, the closed ball of radius $\eps$ around $p$. If $X$ is a group $B_\eps^X=B_\eps^X(p)$ where $p$ is the trivial element (zero or one according to the structure). 
Given Lie groups $G,H...$ we denote their Lie algebras  by the corresponding lower case Gothic letters $\gog,\goh..$. Let $G$ be a Lie group. We choose a right invariant metric $d(\cdot,\cdot)$ on it, coming from a right invariant Riemannian metric. Let $\Ga< G$ be a lattice in $G$. We denote the projection from $G$ to the quotient $X=G/\Ga$, by $g\mapsto\bar{g}$.  We define the following metric on $X$ (also denoted by $d(\cdot,\cdot)$)
\begin{equation}\label{eq:the metric on X_{d+1}}
d(\bar{g},\bar{h})=\inf_{\ga_i\in\Ga} d(g\ga_1,h\ga_2)=\inf_{\ga\in\Ga}d(g,h\ga).
\end{equation}
Under these metrics, for any compact set $K\subset X$ there exist an \textit{isometry radius} $\eps(K)$, i.e. a positive number $\eps$ such that for any $x\in K$, the map $g\mapsto gx$ is an isometry between $B_\eps^G$ and $B_\eps^X(x)$. 
Given a decomposition of $\gog=\oplus_1^l V_i$, the map $X\mapsto\exp v_1\dots\exp v_l$ (where $X=\sum v_i$ and $v_i\in V_i$) has the identity map as its derivative at zero. It follows that it is bi-Lipschitz on a ball of small enough radius around zero. We refer to such a map as a \textit{decomposition chart} and to the corresponding radius as a \textit{bi-Lipschitz radius}. When taking into account the above, we get that given a compact set $K\subset X$ and a decomposition $\gog=\oplus_1^lV_i$, we can speak of a \textit{bi-Lipschitz radius $\del(K)$, for $K$ with respect to this decomposition chart}, i.e. we choose $\del=\del(K)$ to be small enough so that the image of $B_\del^\gog$ under the decomposition chart will be contained in the ball of radius $\eps(K)$ around the identity element. Note that under these conventions a bi-Lipschitz radius for $K$ with respect to a decomposition chart is always an isometry radius.  We shall use the following notation: Given a semigroup $C\subset G$ and a compact set $K\subset X$ we denote 
\begin{equation}\label{K_C}
K_C=\left\{x\in X : Cx\subset K\right\}.
\end{equation} 
Note that $K_C$ is a compact (possibly empty) $C$-invariant set.\\ 
Let $G$ be semisimple and $\bR$-split (for our purpose it will be enough to consider $G=SL_d(\bR)$). Let $A<G$ be  a maximal $\bR$-split torus in $G$ (for example the group of diagonal matrices in $SL_d(\bR)$). We fix on $\gog$ a supremum norm with respect to a basis of $\gog$ whose elements belong to one dimensional common eigenspaces of the adjoint action of $A$.
For an element $a\in A$ we denote  by $U^\pm(a),\mathfrak{u}^\pm(a)$, the stable and unstable horospherical subgroups and Lie algebras associated with $a$. That is 
$$\begin{array}{ll}
U^+(a)=\left\{g\in G : a^{-n}ga^n\longrightarrow_{n\to\infty} e\right\},
U^-(a)=U^+(a^{-1}),\\
 \mathfrak{u}^\pm(a)=\left\{X\in \gog : Ad_a^n(X)\longrightarrow_{n\to\mp\infty} 0\right\}.
\end{array}$$
We denote by $\mathfrak{u}^0(a)$ the Lie algebra of the centralizer of $a$, that is $\{X\in\mathfrak{g} : Ad_a(X)=X\}.$ Note that from the semisimplicity of $a$, it follows that $\mathfrak{g}=\mathfrak{u}^+(a)\oplus\mathfrak{u}^0(a)\oplus\mathfrak{u}^-(a)$. When a fixed element $a\in A$ is given, we denote for $X\in\gog$, its components in $\gou^+,\gou^-,\gou^0$, by $X^+,X^-,X^0,$ respectively.  

We shall need the following lemma, the reader is advised to skip it for the time being and return to it after seeing it in use in the next section:
\begin{lemma}\label{technical lemma}
For a fixed element $e\ne a\in A$ 
 there exist $\lam>1$ and $\del,M,c>0$ such that for any $X_i,Y_i\in B_\del^{\gog}, i=1,2$ with $X_i\in\gou^+(a)$ and $||Y_1-Y_2||<\frac{||X_1-X_2||}{M}$, if for an integer $k$, for any $0\le j\le k$ 
$$d(a^j\exp X_1\exp Y_1,a^j\exp X_2\exp Y_2)<\del$$  then for any $0\le j\le k$  $$d(a^j\exp X_1\exp Y_1,a^j\exp X_2\exp Y_2)\ge c\lam^j||X_1-X_2||.$$
\end{lemma}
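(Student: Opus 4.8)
The plan is to transfer the entire statement into the Lie algebra $\gog$ using the right‑invariance of the metric, and then play off two facts against each other: that $a$ uniformly expands $\gou^+(a)$, and that the fixed norm on $\gog$ is a supremum norm adapted to the eigenspace decomposition of $\textrm{Ad}_A$. Write $g_i=\exp X_i\exp Y_i$ and $q=g_1g_2^{-1}=\exp X_1\exp Y_1\exp(-Y_2)\exp(-X_2)$. Right‑invariance of $d$ gives, for all $0\le j\le k$,
\[
d\pa{a^j\exp X_1\exp Y_1,\ a^j\exp X_2\exp Y_2}=d\pa{a^jqa^{-j},e}.
\]
If $\del$ is small, the four factors of $q$ lie well inside a fixed exponential chart about $e$, so $W:=\log q$ is defined, $\norm W\le C\del$, and $a^jqa^{-j}=\exp\pa{\textrm{Ad}_{a^j}W}$. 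The hypothesis that $d(a^jqa^{-j},e)<\del$ for every $0\le j\le k$ is needed only to keep each of these points inside the bi-Lipschitz neighbourhood of $e$, so that $d(a^jqa^{-j},e)$ is comparable up to a fixed constant $K$ to $\norm{\log(a^jqa^{-j})}=\norm{\textrm{Ad}_{a^j}W}$. Thus it is enough to produce $\lam>1$ and $\del,M>0$ such that, under the stated constraints on $X_i,Y_i$,
\[
\norm{\textrm{Ad}_{a^j}W}\ \ge\ \tfrac12\,\lam^j\,\norm{X_1-X_2}\qquad(0\le j\le k),
\]
and then set $c=\tfrac1{2K}$.

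Next, decompose $W=W^++W^0+W^-$ along $\gog=\gou^+(a)\oplus\gou^0(a)\oplus\gou^-(a)$. Because the norm is a supremum norm in a basis of common eigenvectors of $\textrm{Ad}_A$, these three summands occupy disjoint coordinates, so $\norm{\textrm{Ad}_{a^j}W}\ge\norm{\textrm{Ad}_{a^j}W^+}$ for every $j$. Let $\lam:=\min\set{\chi(a):\chi(a)>1}$, the minimum over those eigenvalues of $\textrm{Ad}_a$ on $\gog$ that exceed $1$; this set is finite and, since $a\ne e$, nonempty, so $\lam>1$, and again by adaptedness of the norm $\norm{\textrm{Ad}_{a^j}W^+}\ge\lam^j\norm{W^+}$. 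Both inequalities are unconditional. It therefore remains to show $\norm{W^+}\ge\tfrac12\norm{X_1-X_2}$, and since $X_1-X_2\in\gou^+(a)$ this follows from
\[
\norm{\,W-(X_1-X_2)\,}\ \le\ \tfrac12\,\norm{X_1-X_2}.
\]

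Proving this last inequality is the heart of the matter and the main obstacle. A crude Campbell--Hausdorff bound only yields $\norm{W-(X_1-X_2)}=O(\del)$, which is useless once $\norm{X_1-X_2}\ll\del$; the Campbell--Hausdorff expansion of $\log\pa{\exp X_1\exp Y_1\exp(-Y_2)\exp(-X_2)}$ has to be organised so that every error term carries an explicit factor of $X_1-X_2$ or of $Y_1-Y_2$. One does this in three steps. First, $\exp Y_1\exp(-Y_2)=\exp V$ with $\norm V\le C\norm{Y_1-Y_2}$: the map $(Y_1,Y_2)\mapsto\log(\exp Y_1\exp(-Y_2))$ is analytic and vanishes identically on $\{Y_1=Y_2\}$, so a mean value estimate along the segment $t\mapsto(Y_2+t(Y_1-Y_2),Y_2)$ gives the bound uniformly for $Y_i\in B_\del^\gog$ (requiring $\del$ below the radius of analyticity). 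Second, $\exp X_1\exp V\exp(-X_2)=\exp Z\,\exp V'$ with $Z=\log(\exp X_1\exp(-X_2))$ and $V'=\textrm{Ad}_{\exp X_2}V$, so $\norm{V'}\le C\norm{Y_1-Y_2}$; each higher Campbell--Hausdorff term of $Z$ is a bracket monomial in $X_1$ and $X_2$, and substituting $X_1=X_2+(X_1-X_2)$ and cancelling $[X_2,X_2]=0$ rewrites it as a sum of brackets each carrying a factor $X_1-X_2$, whence $Z=(X_1-X_2)+\rho$ with $\norm\rho\le C\del\norm{X_1-X_2}$. Third, $W=\log(\exp Z\exp V')=Z+V'+(\text{bracket terms})$, and each bracket term contains a factor $V'$, hence has norm $\le C\norm{Y_1-Y_2}$. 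Collecting,
\[
\norm{\,W-(X_1-X_2)\,}\ \le\ C\del\,\norm{X_1-X_2}+C\,\norm{Y_1-Y_2}\ \le\ \pa{C\del+\tfrac{C}{M}}\norm{X_1-X_2},
\]
using $\norm{Y_1-Y_2}<\norm{X_1-X_2}/M$. It then suffices to fix $\del$ small enough that $C\del\le\tfrac14$, that $\del$ is below the radius of analyticity above, and below the bi-Lipschitz radius about $e$, and then $M$ large enough that $C/M\le\tfrac14$; this gives $\norm{W-(X_1-X_2)}\le\tfrac12\norm{X_1-X_2}$ and finishes the proof, the case $X_1=X_2$ being trivial.
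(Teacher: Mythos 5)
Your proof is correct, and it follows the same overall strategy as the paper's: use right-invariance to reduce to the divergence of $a^j q a^{-j}$ from $e$ where $q=\exp X_1\exp Y_1\exp(-Y_2)\exp(-X_2)$, exploit the adapted supremum norm to get $\norm{\mathrm{Ad}_{a^j}(\cdot)}\ge\lam^j\norm{(\cdot)^+}$, and reduce everything to showing that the $\gou^+$-component of the ``difference element'' is comparable to $\norm{X_1-X_2}$ once $\norm{Y_1-Y_2}\ll\norm{X_1-X_2}$. The genuine difference is in the charts and in how that last lower bound is obtained. The paper works with the product chart $\vphi(v)=\exp v^+\exp v^0\exp v^-$, sets $u=\vphi^{-1}(q)$, and proves $\norm{u^+}>c_1\norm{X_1-X_2}$ (its Claim 1) by a chain of manipulations with the division map $w(v_1,v_2)=\log(\exp v_1\exp(-v_2))$ and several auxiliary radii; you instead take the single logarithm $W=\log q$ and bound $\norm{W-(X_1-X_2)}\le C\del\norm{X_1-X_2}+C\norm{Y_1-Y_2}$ directly by organizing the Campbell--Hausdorff expansion, which is cleaner and makes the roles of $\del$ and $M$ transparent. (Your observation that $\norm{W^+}\ge\norm{X_1-X_2}-\norm{W-(X_1-X_2)}$ because projections are norm-nonincreasing for the adapted sup norm is exactly right.) One point you state but do not prove is the assertion that the hypothesis $d(a^jqa^{-j},e)<\del$ for all $0\le j\le k$ keeps $\mathrm{Ad}_{a^j}W$ inside the bi-Lipschitz chart: since $\exp$ is only locally injective, a large $\mathrm{Ad}_{a^j}W$ could a priori still exponentiate to a point near $e$. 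This requires the short induction of the paper's Claim 2 --- choose $\del$ small relative to $\eta/\norm{\mathrm{Ad}_a}$ so that $\mathrm{Ad}_{a^j}W$ cannot leave $B^{\gog}_\eta$ in a single step without forcing $d(a^jqa^{-j},e)\ge\del$ --- an easy fix, but it should be recorded.
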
  
\begin{proof}
Let $\eta>0$ be a bi-Lipschitz radius for the decomposition charts $\exp$ and $\vphi$, corresponding respectively to the trivial decomposition  and the decomposition $\gog=\gou^+(a)\oplus\gou^0(a)\oplus\gou^-(a)$ i.e. $\vphi:B_\eta^\gog\to G$ is the map $\vphi(v)=\exp v^+\exp v^0\exp v^-$. Let $0<\del_1<\eta$ satisfy 
\begin{equation}
\forall X_i,Y_i\in B_{\del_1}^\gog, i=1,2, \quad \exp X_1\exp Y_1\exp -Y_2\exp -X_2\in \vphi(B_\eta^\gog).
\end{equation} 
We can define $u: \left(B_{\del_1}^\gog\right)^4\to B_\eta^\gog$ by 
$$\forall X_i,Y_i\in B_{\del_1}^\gog, i=1,2\quad u(X_i,Y_i)=\vphi^{-1}\left(\exp X_1\exp Y_1\exp -Y_2\exp -X_2\right).$$
When $X_i,Y_i\in  B_{\del_1}^\gog, i=1,2$ are fixed, we simplify our notation and write instead of $u(X_i,Y_i),u^\pm(X_i,Y_i),u^0(X_i,Y_i)$, just $u,u^\pm,u^0$. 
Thus we have the identity: $\forall X_i,Y_i\in B_{\del_1}^\gog, i=1,2$
\begin{equation}\label{eq:X_i,Y_i}
 \vphi(u)=\exp u^+\exp u^0\exp u^-
=\exp X_1\exp Y_1\exp -Y_2\exp -X_2.
\end{equation}
Let us formulate two claims that we will use:\\
\quad\\
\textit{Claim 1}: There exist $0<\del_2<\del_1$ and $0<M,c_1$, such that   
$$\forall X_i,Y_i\in B_{\del_2}^\gog ,i=1,2,\textrm{ with } X_i\in\gou^+, ||Y_1-Y_2||<\frac{||X_1-X_2||}{M} $$
\begin{equation}\label{eq:u^+}
\textrm{we have } ||u^+||>c_1||X_1-X_2||.
\end{equation}
\quad\\
\textit{Claim 2}: There exist $0<\del_3<\del_2$, such that if $v\in B_\eta^\gog$ and $k\in\bN$ are such that $\forall 0\le j\le k, d\left(\vphi\left( Ad_a^j(v)\right),e\right)<\del_3$, then $\forall 0\le j\le k, Ad_a^j(v)\in B_\eta^\gog$.\\
\quad\\
Let us describe how to conclude the lemma from these claims: Let $\lam$ be the minimum amongst the absolute values of the eigenvalues of $Ad_a$ that are greater than $1$. Choose $\del=\del_3$ as in \textit{claim 2}, $M>0$ as in \textit{claim 1} and $c=c_1\cdot c_2$, where $c_1$ is as in \textit{claim 1} and $c_2$ satisfies $d(\vphi(v_1),\vphi(v_2))>c_2||v_1-v_2||$ for any $v_1,v_2\in B_\eta^\gog$. Note that because of the choice of the norm on $\gog$ , for $v\in\gog$ one has for any integer $j$
\begin{equation}\label{eq:norm and phi}
||Ad^j_a(v)||=||Ad^j_a(v^+)+Ad^j_a(v^0)+Ad^j_a(v^-)||\ge||Ad^j_a(v^+)||\ge\lam^j||v^+||.
\end{equation}
Let $X_i,Y_i$ and $k\in\bN$ be as in the statement of the lemma. For any $0\le j\le k$ we have:
$$\begin{array}{lll}
\del & > & d(a^j\exp X_1\exp Y_1,a^j\exp X_2\exp Y_2) \\
& =& d(a^j\exp X_1\exp Y_1\exp -Y_2\exp -X_2a^{-j},e)\\
& =& d(a^j\vphi(u(X_i,Y_i))a^{-j},e)\\
&= & d(\vphi(Ad_a^j(u)),e)\\
 & >& c_2||Ad_a^j(u)||\ge c_2\lam^j||u^+||
>c_1c_2\lam^j||X_1-X_2||=c\lam^j||X_1-X_2||.
\end{array}$$
We used the right invariance of the metric in the first equality, the fact that $\del<\del_1$ in the second and the relation $a\vphi(\cdot)a^{-1}=\vphi(Ad_a(\cdot))$ in the third. In the last row of inequalities we used \textit{claim 2} and the choice of $c_2$ in the first inequality, ~\eqref{eq:norm and phi} in the second and \textit{claim 1} in the third.
We now turn to the proofs of the above claims.
\begin{notation}
If two positive numbers $\al,\be$, satisfy $r\al<\be<\frac{1}{r}\al$, for some $r>0$, we denote it by $\al\sim_r\beta.$
\end{notation}
\textit{Proof of Claim 1.}
We use the notation of lemma ~\ref{technical lemma}. Let $0<\rho<\del_1$ be such that the map $(v_1,v_2)\mapsto \exp v_1\exp -v_2$, takes $\left(B_\rho^\gog\right)^2$ into $\exp B_\eta^\gog$. Since $\eta$ was chosen to be a bi-Lipschitz radius for the map $\exp$, there is a smooth function $w:\left(B_\rho^\gog\right)^2\to B_\eta^\gog$ which satisfies the relation
$$\forall v_1,v_2\in B_\rho^\gog, \quad \exp w(v_1,v_2)=\exp v_1\exp -v_2.$$
Note that if $v_1,v_2\in\gou^+$, then $w(v_1,v_2)\in B_\eta^{\gou^+}.$
Let $X_i,Y_i\in B_{\rho}^\gog, i=1,2$. The expressions in ~\eqref{eq:X_i,Y_i} are equal to
\begin{equation}\label{eq:1}
\exp w(X_1,X_2) \exp Ad_{\exp X_2}w(Y_1,Y_2).
\end{equation}
Let us sketch the line of proof we shall pursue: We show that $w(v_1,v_2)\sim_r||v_1-v_2||$, for some $r>0$. We choose the constants carefully in such a way that given $X_i,Y_i$ as in the statement of the claim, then there exist a $v\in\gog$ of length less than half of $||X_1-X_2||$, with $\vphi(v)= \exp Ad_{\exp X_2}w(Y_1,Y_2)$. It then follows from ~\eqref{eq:1} that  
$$\exp u^+=\exp w(X_1,X_2)\exp v^+=\exp w\left(w(X_1,X_2),-v^+\right).$$
It then follows that (ignoring constants that will appear) $$||u^+||=||w(X_1,X_2)+v^+||>||X_1-X_2||-||v^+||>\frac{||X_1-X_2||}{2}.$$
Let us turn now to the rigorous argument. 
The fact that $\eta$ is a bi-Lipschitz radius for $\exp$ implies the existence of a constant $r>0$, such that $\forall v_1,v_2\in B_\rho^\gog$
$$||v_1-v_2||\sim_r d(\exp v_1,\exp v_2)=d(\exp v_1\exp -v_2,e)\sim_r||w(v_1,v_2)||$$
\begin{equation}\label{eq:2}
\Rightarrow ||v_1-v_2||\sim_{r^2}||w(v_1,v_2)||.
\end{equation} 
Let $M_0$ bound from above the operator norm of $Ad_{\exp v}$ as $v$ ranges over $B_\rho^\gog$. In ~\eqref{eq:1}, we have
\begin{equation}\label{eq:3}
||Ad_{\exp X_2}w(Y_1,Y_2)||\le M_0||w(Y_1,Y_2)||<\frac{M_0}{r^2}||Y_1-Y_2||.
\end{equation}
Let $0<\del_2<\rho$ be such that $\frac{2\del_2}{r^2}<\rho$. This implies  by \eqref{eq:2}, that $\forall v_1,v_2\in B_{\del_2}^\gog, ||w(v_1,v_2)||<\rho.$ There exist some $0<\rho'<\eta$ such that $\exp(B_{\rho'}^\gog)\subset \vphi(B_\rho^\gog).$ Note that from the fact that $\exp$ is bi-Lipschitz on $B_{\rho'}^\gog$ and $\vphi^{-1}$ is bi-Lipschitz on $\exp\left(B_{\rho'}^\gog\right)$, it follows that there exist a constant $\tilde{r}$ such that 
\begin{equation}\label{eq:3.25}
\forall w\in B_{\rho'}^\gog, ||w||\sim_{\tilde{r}} ||\vphi^{-1}\left(\exp (w)\right)||.
\end{equation}
Let $M=\max\left\{\frac{2\del_2 M_0}{r^2\rho'},\frac{2M_0}{\tilde{r}r^4}\right\}.$ It follows from ~\eqref{eq:3}, that if $X_i,Y_i\in B_{\del_2}^\gog, i=1,2$ are such that $X_i\in\gou^+$ and $||Y_1-Y_2||<\frac{||X_1-X_2||}{M}$, then
\begin{equation}\label{eq:3.5}
||Ad_{\exp X_2}w(Y_1,Y_2)||<\frac{M_0||X_1-X_2||}{r^2M}
\end{equation}
and by our choice of $M$
\begin{equation}\label{eq:4}
||Ad_{\exp X_2}w(Y_1,Y_2)||<\rho'.
\end{equation}
By the choice of $\rho'$, there exist some $v\in B_\rho^\gog$ satisfying
\begin{equation}\label{eq:4.5}
\vphi(v)=\exp Ad_{\exp X_2}w(Y_1,Y_2).
\end{equation}
Note that by ~\eqref{eq:3.25} with $w=Ad_{\exp X_2}w(Y_1,Y_2)$, by ~\eqref{eq:3.5}, and by the choice of $M$
\begin{equation}\label{eq:5.5}
||v||\sim_{\tilde{r}}||Ad_{\exp X_2}w(Y_1,Y_2)||\Rightarrow ||v||<\frac{M_0||X_1-X_2||}{\tilde{r}r^2M}\le \frac{r^2||X_1-X_2||}{2}.
\end{equation} 
The expressions in ~\eqref{eq:X_i,Y_i} and in ~\eqref{eq:1} are equal to
\begin{equation}\label{eq:5}
\exp u^+\exp u^0\exp u^-=\exp w(X_1,X_2)\exp v^+\exp v^0\exp v^-.
\end{equation}
As remarked above, the fact that $X_i\in\gou^+$ implies that $w(X_1,X_2)\in \gou^+.$
From our choice of $\rho$ and the fact that $v^+,w(X_1,X_2)\in B_\rho^{\gou^+}$, it follows that $w\left(w(X_1,X_2),-v^+\right)\in \gou^+$ is defined and satisfies: 
\begin{equation}\label{eq:6}
\exp w(X_1,X_2)\exp v^+=\exp w\left(w(X_1,X_2),-v^+\right).
\end{equation}
Because $w(\cdot,\cdot)$ takes values in $B_\eta^\gog$, and $\vphi$ is injective on $B_\eta^\gog$, it follows from ~\eqref{eq:5},~\eqref{eq:6}, that 
$$u^+=w\left(w(X_1,X_2),-v^+\right).$$
Because of ~\eqref{eq:2} and ~\eqref{eq:5.5}
$$||u^+||=||w\left(w(X_1,X_2),-v^+\right)||>r^2||w(X_1,X_2)+v^+||$$
$$\ge r^2\left(||w(X_1,X_2)||-||v^+||\right)\ge r^4||X_1-X_2||-\frac{r^4}{2}||X_1-X_2||=\frac{r^4}{2}||X_1-X_2||.$$
Thus \textit{claim 1} follows with the above choices of $\del_2$ and $M$ and with $c_1=\frac{r^4}{2}.$\\
\textit{Proof of Claim 2.} Let $M_1=||Ad_a||$. Let $0<\del_3<\del_2$ satisfy 
\begin{equation}\label{eq:claim2}
B_{\del_3}^G\subset\vphi\left( B_{\frac{\eta}{M_1}}^\gog\right).
\end{equation}
Let $v\in B_\eta^\gog$ and $k\in\bN$ satisfy the assumptions of \textit{claim 2}. Assume by way of contradiction that there exist some $0\le j <k$ such that $Ad_a^j(v)\in B_\eta^\gog$ but $Ad_a^{j+1}(v)\notin B_\eta^\gog$. We conclude that $$\eta<M_1||Ad_a^j(v)||\Rightarrow Ad_a^j(v)\notin B_{\frac{\eta}{M_1}}^\gog.$$    
This contradicts the assumption that $\vphi\left(Ad_a^j(v)\right)\in B_{\del_3}^G$ and ~\eqref{eq:claim2} because $\vphi$ is injective on $B_\eta^\gog$. 
\end{proof}
%-----------------------------The Set of Exceptions To Bigwood conjecture
\section{The set of exceptions to GLC}
\label{sec:the set of exceptions}
In this section we prove theorem \ref{theorem:the set of exceptions to Bigwood conjecture} and its corollaries. We go along the lines of \S 4 in ~\cite{EK} and the fundamental ideas appearing in \cite{EKL}. The main hidden tool is the measure classification theorem in \cite{EKL}. The main reason which prevents us from citing known results is the fact that in the embedding $\tau: Y_d\hookrightarrow X_{d+1}$ ~\eqref{def:tau}, the grids which are not L and thus have bounded $A^+_{d+1}$ orbit, do not lie (locally) on single unstable leaves of elements in the cone, but lie on products of unstable leaves (see lemmas ~\ref{lemma 4.2 from EK},~\ref{dimension and entropy}). 
The following two lemmas furnish the link between dimension and entropy in our discussion. Lemma ~\ref{lemma 4.2 from EK} is essentially lemma 4.2 from \cite{EK}. For the reader's convenience and the completeness of our presentation, we include the proof in the appendix. Lemma ~\ref{dimension and entropy} is one of the new ingredients appearing in this paper (recall the notation of ~\eqref{K_C}):
\begin{lemma}\label{lemma 4.2 from EK}
Let $C\subset A_{d+1}$ be a semigroup, $a\in C$ and $K\subset X_{d+1}$ a compact set. If for some $\del>0$ and $x\in K$
$$\dim\left(\exp \left(B_\del^{\mathfrak{u}^+(a)}\right)\cdot x\cap K_C\right)>0$$
then $a$ acts with positive topological entropy on $K_C$.
\end{lemma}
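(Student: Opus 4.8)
\emph{Proof proposal.} The plan is to run the standard argument that expansion in the unstable direction converts positive box dimension into positive topological entropy, the one subtlety being that the set in question lies on a single unstable leaf inside the quotient $X_{d+1}$, so I must keep track of when distances in $X_{d+1}$ agree with distances in $G_{d+1}$. Throughout I use that $K_C$ is compact, that $a$ maps $K_C$ into itself (since $a\in C$ and $C$ is a semigroup, $Ca\subseteq C$), and that $K_C\subseteq K$, so every $z\in K_C$ has all its forward iterates $a^jz$ in $K$. Write $Z=\exp\pa{B_\del^{\gou^+(a)}}\cdot x\cap K_C$, so that $\dim Z>0$ by hypothesis.

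\emph{Normalisation.} Let $\del_0,\lam>1,M,c>0$ be the constants of Lemma~\ref{technical lemma} for the element $a$. Covering $B_\del^{\gou^+(a)}$ by finitely many balls of small radius, using that box dimension of a finite union is the maximum of the dimensions, and re-centering the chart at a point $x'\in K_C$ of the intersection which lies in a ball still carrying positive dimension — here one uses that $\gou^+(a)$ is a Lie subalgebra, so translating along it by $\exp$ stays in $U^+(a)$, together with the Baker--Campbell--Hausdorff formula to keep norms under control — I reduce (relabelling $x'$ as $x$) to the case in which $x\in K_C$ and $\del$ is as small as I wish. In particular I take $\del\le\del_0$, small enough that $X\mapsto\exp X\cdot x$ is bi-Lipschitz on $B_\del^{\gou^+(a)}$ (relative to the isometry radius $\eps(K)$) and small enough for the next step. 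Writing the points of $Z$ as $z_i=\exp X_i\cdot x$, $X_i\in B_\del^{\gou^+(a)}$, the bi-Lipschitz property gives $\dim W=\dim Z>0$, where $W:=\set{X_i}\subseteq\gou^+(a)$, metrised by the norm $\norm{\cdot}$ on $\gou^+(a)$.

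\emph{Expansion along the leaf.} Fix a threshold $\del_1$ with $0<\del_1\le\del_0$ and $C_a\del_1<\eps(K)$, where $C_a$ satisfies $d(aga^{-1},e)\le C_ad(g,e)$ for $g\in B_{\eps(K)}^{G_{d+1}}$. I claim: if $z_i,z_{i'}\in Z$ and $d\pa{a^jz_i,a^jz_{i'}}<\del_1$ for all $0\le j\le n$, then $d\pa{a^nz_i,a^nz_{i'}}\ge c\lam^n\norm{X_i-X_{i'}}$. Indeed, since $X_i,X_{i'}\in\gou^+(a)$ the points $a^jz_i$ and $a^jz_{i'}$ lie on the common unstable leaf through $a^jx$ and are related by $a^j\exp X_i\exp(-X_{i'})a^{-j}\in U^+(a)$; an induction on $j$, using at each step the isometry radius $\eps(K)$ at $a^jz_{i'}\in K$ and the bound $C_a\del_1<\eps(K)$, shows that as long as the quotient distances stay below $\del_1$ they coincide with the ambient distances $d\pa{a^j\exp X_i,a^j\exp X_{i'}}$ in $G_{d+1}$. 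Applying Lemma~\ref{technical lemma} with $Y_1=Y_2=0$ (so that $\norm{Y_1-Y_2}=0<\norm{X_i-X_{i'}}/M$) then yields $d\pa{a^j\exp X_i,a^j\exp X_{i'}}\ge c\lam^j\norm{X_i-X_{i'}}$ for all $0\le j\le n$, which is the claim.

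\emph{Counting and conclusion.} For $\rho>0$ take a maximal $\rho$-separated subset $W_\rho\subseteq W$, $|W_\rho|=\cS_\rho(W)$, and let $Z_\rho\subseteq Z\subseteq K_C$ be the corresponding points. By the claim, whenever $n\ge\log\pa{\del_1/(c\rho)}/\log\lam$ any two distinct points of $Z_\rho$ are moved $\del_1$-apart by some $a^j$ with $0\le j\le n$, so $Z_\rho$ is $(n,\del_1)$-separated for $a$ and $\cS_{n,\del_1}(K_C,a)\ge\cS_\rho(W)$. Now fix $0<\beta'<\dim W$; since $\limsup_{\rho\to 0}\log\cS_\rho(W)/\av{\log\rho}=\dim W$ there is a sequence $\rho_k\to 0$ with $\cS_{\rho_k}(W)>\rho_k^{-\beta'}$. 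With $n_k=\lceil\log\pa{\del_1/(c\rho_k)}/\log\lam\rceil\to\infty$ one gets $\log\cS_{n_k,\del_1}(K_C,a)>\beta'\av{\log\rho_k}=\beta'\log\lam\cdot n_k+O(1)$, hence $\limsup_n\log\cS_{n,\del_1}(K_C,a)/n\ge\beta'\log\lam>0$; and since $\limsup_n\log\cS_{n,\eps}(K_C,a)/n$ is non-increasing in $\eps$, it follows that $h_{top}(a|_{K_C})\ge\beta'\log\lam>0$. The conceptual content — expansion forces separation forces entropy — is routine; the real work, and the main obstacle, is the induction in the ``expansion along the leaf'' step, i.e. certifying that distances in the quotient $X_{d+1}$ genuinely agree with those in $G_{d+1}$ along the orbit segment so that Lemma~\ref{technical lemma} applies, together with the re-centering bookkeeping in the normalisation step.
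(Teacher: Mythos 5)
Your proposal is correct and follows essentially the same route as the paper's appendix proof: cover and re-center so the positive-dimensional set sits in a small unstable chart based at a point of $K_C$, use the isometry radius to identify quotient distances with group distances along the orbit segment, apply Lemma~\ref{technical lemma} with $Y_1=Y_2=0$, and count separated sets to get $h_{top}\ge \rho\log\lam>0$. Your explicit induction (with the conjugation constant $C_a$) certifying that quotient and ambient distances agree while they stay below the threshold is a point the paper states without spelling out, so this is a welcome bit of extra care rather than a deviation.
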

\noindent For the proof of theorem ~\ref{theorem:the set of exceptions to Bigwood conjecture} we shall need the following generalization of lemma \ref{lemma 4.2 from EK}:
\begin{lemma}\label{dimension and entropy}
Let $C_2\subset C_1\subset A_{d+1}$ be semigroups, $a_i\in C_i,  i=1,2,$ and $K\subset X_{d+1}$ a compact set. Assume that there exists subspaces $V_i$ of 
$\mathfrak{u}^+(a_i)$ such that for any $b\in C_2$, $V_1\subset \mathfrak{u}^-(b)$. Then, there exists $\del>0,$ such that if for some $x\in K$  
$$\dim\left(\exp B_\del^{V_1} \exp B_\del^{V_2}\cdot x\cap K_{C_1}\right)>0,$$
then either $a_1$ acts with positive topological entropy on $K_{C_1}$, or there exists a compact set $\tilde{K}\supset K$, such that $a_2$ acts with positive topological entropy on $\tilde{K}_{C_2}$.
\end{lemma}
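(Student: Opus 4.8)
The plan is to reduce Lemma~\ref{dimension and entropy} to Lemma~\ref{lemma 4.2 from EK} by using the extra coordinate directions in $V_1$ to "absorb" the $V_1$-part of the set into a single unstable leaf for $a_2$, at the cost of possibly enlarging the ambient compact set. First I would set up the decomposition chart for $\gog = \mathfrak{sl}_{d+1}(\bR)$ with respect to $a_1$ and $a_2$ simultaneously and fix $\del$ to be a common bi-Lipschitz/isometry radius for a suitable compact set; here the hypothesis that $V_1 \subset \mathfrak{u}^+(a_1)$ and $V_1 \subset \mathfrak{u}^-(b)$ for all $b\in C_2$ is the crucial structural input, as it says the $V_1$-direction is expanding for $a_1$ but contracting for everything in $C_2$. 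Let $F = \exp B_\del^{V_1}\exp B_\del^{V_2}\cdot x \cap K_{C_1}$ and assume $\dim F > 0$.

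The dichotomy comes from asking how the positive-dimensionality of $F$ distributes between the $V_1$ and $V_2$ factors. Writing points of $F$ as $\exp(u_1)\exp(u_2)\cdot x$ with $u_i \in B_\del^{V_i}$, consider the projection $F \to B_\del^{V_1}$, $\exp(u_1)\exp(u_2)\cdot x \mapsto u_1$. If some fiber of this projection (i.e. the slice of $F$ over a fixed $u_1$) has positive dimension, then after translating by $\exp(u_1)$ we obtain a positive-dimensional subset of $\exp B_\del^{V_2}\cdot x' \subset \exp B_\del^{\mathfrak{u}^+(a_2)}\cdot x'$ lying inside $K_{C_1} \subset K_{C_2}$ (using $C_2 \subset C_1$). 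Then Lemma~\ref{lemma 4.2 from EK} applied with $a = a_2$, $C = C_2$, and the compact set $K$ gives positive topological entropy of $a_2$ on $K_{C_2}$, and we may take $\tilde K = K$. In the complementary case, every $V_1$-fiber is zero-dimensional, so by the standard fact that $\dim F \le \dim(\text{base}) + \sup_{\text{fiber}}\dim(\text{fiber})$ for such product-type sets (via $\eps$-covering counts), the projection of $F$ to $B_\del^{V_1}$ is positive-dimensional.

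In this second case I want to flow by $a_1^n$ for large $n$. Since $V_1 \subset \mathfrak{u}^+(a_1)$, applying $a_1^n$ expands the $V_1$-coordinate while not ruining the fact that we started inside $K_{C_1}$ — indeed $a_1^n K_{C_1} \subset K_{C_1}$ because $K_{C_1}$ is $C_1$-invariant. So $a_1^n F \subset K_{C_1}$ still has positive-dimensional projection to the ($V_1$-)unstable direction of $a_1$, now spread over a larger ball; the $V_2$-coordinates get distorted by $Ad_{a_1^n}$, but after rescaling back into a fixed ball this is a bi-Lipschitz change on the relevant scales. Passing to a limit point $x_\infty$ of $a_1^n x$ (possible since $K_{C_1}$ is compact) and using Lemma~\ref{technical lemma} to control how the flowed set sits relative to the unstable leaf $\exp B_\del^{\mathfrak{u}^+(a_1)}\cdot x_\infty$, I get a positive-dimensional subset of a single $a_1$-unstable leaf through a point of $K_{C_1}$ that stays in $K_{C_1}$; Lemma~\ref{lemma 4.2 from EK} with $a = a_1$, $C = C_1$, $K = K_{C_1}$ then yields positive entropy of $a_1$ on $K_{C_1}$, which is the first alternative (here $\tilde K$ can again be taken to be $K$, or a slight enlargement if the limiting argument pushes mass slightly outside $K$, which is where the clause "there exists a compact set $\tilde K \supset K$" earns its keep).

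The main obstacle is the second case: making rigorous the claim that flowing by $a_1^n$ converts "positive-dimensional projection onto the $V_1 \subset \mathfrak{u}^+(a_1)$ direction of a set that is a graph over $B_\del^{V_1}\times B_\del^{V_2}$" into "positive-dimensional subset of a genuine unstable leaf for $a_1$ staying in a compact set." One has to track the $\mathfrak{u}^0$ and $\mathfrak{u}^-$ components that appear when one writes $\exp(u_1)\exp(u_2)$ in the $\mathfrak{u}^+(a_1)\oplus\mathfrak{u}^0(a_1)\oplus\mathfrak{u}^-(a_1)$ chart, show they contract or stay bounded under $a_1^n$ while the $\mathfrak{u}^+(a_1)$-part expands, and conclude via a diagonal/limiting argument (and Lemma~\ref{technical lemma} for the distance lower bound) that the limit object genuinely lives on one unstable leaf. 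Getting the quantitative estimates uniform in $n$ so the limit exists, and verifying the limit set still has positive box dimension — box dimension does not behave well under limits in general, so one really uses that the sets $a_1^n F$ have $\eps$-separated subsets of controlled size at a fixed scale before passing to the limit — is the delicate part. The first case and the additivity-of-dimension step are comparatively routine.
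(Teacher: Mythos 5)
Your overall shape (a dichotomy between the $V_1$- and $V_2$-directions, with one branch fed to Lemma~\ref{lemma 4.2 from EK} for $a_2$ and the other handled by $a_1$) matches the paper, but both branches have genuine gaps. First, the dichotomy itself is not exhaustive. The ``standard fact'' $\dim F\le\dim(\text{base})+\sup_{\text{fiber}}\dim(\text{fiber})$ is false for upper box dimension: the graph of a nowhere-differentiable function has every fiber a point and base $[0,1]$ but box dimension $>1$; worse, the graph of a bijection from a Cantor set of box dimension zero onto an interval has $\dim F\ge 1$, every $V_1$-fiber a single point, and zero-dimensional projection to $V_1$ --- so it falls into neither of your cases. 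The correct split, which the paper uses, is on the \emph{projections}: since $F\subset\pi_{V_1}(F)\times\pi_{V_2}(F)$, positivity of $\dim F$ forces one of the two projections to be positive-dimensional, and the relevant case distinction is $\dim\pi_{V_2}(F)>0$ versus $\dim\pi_{V_2}(F)=0$. In the first case one does \emph{not} have a positive-dimensional slice over a single $u_1$; instead, for each $Y\in\pi_{V_2}(F)$ one picks \emph{some} $X$ with $\exp X\exp Y x\in K_{C_1}$ and uses the identity $b\exp Yx=\exp\bigl(Ad_b(-X)\bigr)\,b\exp X\exp Yx$ together with $V_1\subset\mathfrak{u}^-(b)$ (so $\|Ad_b(-X)\|\le\|X\|\le\del$) to conclude $\exp\bigl(\pi_{V_2}(F)\bigr)x\subset\tilde K_{C_2}$ with $\tilde K\supset\exp(B_\del^{\gog})K$. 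Your claim that the translated set lies in $K_{C_1}\subset K_{C_2}$ is false --- it lies in $\exp(-u_1)K_{C_1}$ --- and this is exactly where the clause $\tilde K\supset K$ earns its keep, not in the second case as you suggest; taking $\tilde K=K$ there does not work.

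Second, your Case 2 plan (flow by $a_1^n$, pass to a limit point, and extract a positive-dimensional subset of a single $a_1$-unstable leaf) is not carried out, and the obstacle you yourself flag --- box dimension does not survive limits --- is real. The paper avoids it entirely: assuming $\dim\pi_{V_2}(F)=0$ and $\dim F=3\rho>0$, it covers $\pi_{V_2}(F)$ by fewer than $\lam^{n\rho}M^{\rho}$ sets of diameter $<\lam^{-n}/M$ and pigeonholes a $\lam^{-n}$-separated subset of $F$ of cardinality $>\lam^{2n\rho-2\rho}$ into one piece, producing $>M^{-\rho}\lam^{n\rho}$ points whose $V_2$-coordinates differ by less than $\lam^{-n}/M$ while their $V_1$-coordinates are $\lam^{-n}$-separated. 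Lemma~\ref{technical lemma} is tailor-made for exactly such pairs ($X_i\in\mathfrak{u}^+(a_1)$, $\|Y_1-Y_2\|<\|X_1-X_2\|/M$) and yields that these points form an $(n,\eps_0)$-separated set for $a_1$ on $K_{C_1}$ directly, with no limiting argument and no need to land on a genuine unstable leaf. If you replace your fiber dichotomy by the projection dichotomy, add the conjugation step with $\tilde K$ in the first case, and substitute the separated-set construction for the flow-and-limit argument in the second, you recover the paper's proof.
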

\noindent The following corollary goes along the lines of Proposition 4.1 from \cite{EK}.
\begin{corollary}\label{dimension=0}
If in lemma \ref{dimension and entropy}, we assume that the $C_i$'s are open cones in $A_{d+1}$, then for any $x\in K$
$$\dim\left(\exp B_\del^{V_1} \exp B_\del^{V_2}\cdot x\cap K_{C_1}\right)=0.$$
\end{corollary}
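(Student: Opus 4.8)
The plan is to deduce Corollary \ref{dimension=0} from Lemma \ref{dimension and entropy} by a standard argument: positive topological entropy on the relevant sets is impossible when the cones are open, hence the hypothesis of Lemma \ref{dimension and entropy} leading to positive entropy can never be met, which forces the dimension to vanish. So the argument is essentially a proof by contradiction combined with an entropy upper bound.

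\begin{proof}[Proof of Corollary \ref{dimension=0}]
Let $\del>0$ be as furnished by Lemma \ref{dimension and entropy}, and suppose towards a contradiction that for some $x\in K$ the set $Z=\exp B_\del^{V_1}\exp B_\del^{V_2}\cdot x\cap K_{C_1}$ has positive dimension. By Lemma \ref{dimension and entropy}, one of two alternatives holds. In the first, $a_1$ acts with positive topological entropy on the compact $C_1$-invariant set $K_{C_1}$. Since $C_1$ is an open cone in $A_{d+1}$, for every $t>0$ the element $a_1^t$ (more precisely, a one-parameter semigroup through $a_1$) eventually lies in $C_1$, so $a_1^t K_{C_1}\subset K$ for all sufficiently large $t$; but $K_{C_1}$ is itself $a_1$-invariant, so in fact $a_1^{t} K_{C_1}\subset K_{C_1}\subset K$ for all $t\ge 0$, and more importantly we may also run the argument with $a_1$ replaced by any element of the open cone $C_1$. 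The point is that $K_{C_1}$ is invariant under the full one-parameter group $\{a_1^t\}_{t\in\bR}$: indeed if $b\in C_1$ and $z\in K_{C_1}$ then $bz\in K$, and since $C_1$ is open we may write any $a_1^{-s}$ as $b' b''^{-1}$ with $b',b''\in C_1$, whence $a_1^{-s}z$ also has its whole forward $C_1$-orbit inside $K$. Thus $K_{C_1}$ carries an action of the full group $\{a_1^t\}$, and since $a_1$ is a regular element of $A_{d+1}$ conjugate into $SL_{d+1}(\bR)$ the argument of \cite{EK} (Proposition 4.1) — reduction to the maximal compact quotient combined with the fact that $\{a_1^t\}$ acts with zero entropy on a space admitting such a group action, or alternatively the observation that a zero-entropy lower bound is forced by the non-divergence on a compact invariant set — shows $h_{top}(a_1|_{K_{C_1}})=0$, a contradiction.

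In the second alternative there is a compact $\tilde K\supset K$ with $a_2$ acting with positive topological entropy on $\tilde K_{C_2}$. Exactly the same reasoning applies: $C_2$ is an open cone, so $\tilde K_{C_2}$ is invariant under the full one-parameter group $\{a_2^t\}_{t\in\bR}$, and the zero-entropy conclusion for the $\{a_2^t\}$-action on the compact set $\tilde K_{C_2}$ again contradicts positivity of $h_{top}(a_2|_{\tilde K_{C_2}})$. Since both alternatives are impossible, the assumption $\dim Z>0$ is untenable, and therefore $\dim Z=0$ for every $x\in K$.
\end{proof}

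The main obstacle, and the place where care is needed, is the passage ``open cone $\Rightarrow$ full group invariance $\Rightarrow$ zero entropy''. Concretely one must justify that a cone-invariant compact set $K_C$ for an open cone $C$ is automatically invariant under the one-parameter group generated by any $a\in C$ (this is the geometric heart, using only that $C$ is open and a semigroup), and then invoke the fact — which is Proposition 4.1 of \cite{EK}, itself resting on the structure of the horospherical subgroups and the vanishing of entropy along a group orbit that stays in a compact set transverse to the unstable directions — that such an action has zero topological entropy. Everything else is bookkeeping: matching the $\del$ of Lemma \ref{dimension and entropy}, and noting that the two alternatives are handled by one and the same entropy-vanishing principle.
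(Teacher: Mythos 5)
Your overall skeleton is the same as the paper's: assume positive dimension, apply Lemma \ref{dimension and entropy} to get positive topological entropy for an element of an open cone acting on a compact cone-invariant set, and derive a contradiction from an entropy-vanishing statement for open cones. The paper simply cites that statement as being established in the proof of Proposition 4.1 of \cite{EK}. The problem is that your attempted justification of that statement does not work, and the gap is precisely at the deepest point of the whole argument.

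First, your ``geometric heart'' is false: a compact set of the form $K_{C}$ for an open cone $C$ is \emph{not} in general invariant under the full one-parameter group generated by an element $a\in C$. To show $a^{-s}z\in K_{C}$ you must show $b a^{-s}z\in K$ for \emph{every} $b\in C$, and for $b$ near the boundary of the cone $ba^{-s}$ need not lie in $C$, so nothing is known about $ba^{-s}z$. Writing $a^{-s}=b'(b'')^{-1}$ with $b',b''\in C$ does not repair this: it reduces the claim to knowing that $(b'')^{-1}z\in K_C$, which is exactly what is in question. (The paper's proof of Proposition \ref{Margulis implies GLC} shows full-group boundedness only for \emph{limit points} of a trajectory in the cone, not for arbitrary points of $K_C$; that distinction matters.) Second, and more seriously, even if you had full $A_{d+1}$-invariance, there is no soft principle forcing zero entropy on a compact $A_{d+1}$-invariant set; whether such sets can carry positive entropy is exactly the content of the deep rigidity results. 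The actual proof in \cite{EK} runs through the variational principle: positive topological entropy of $a$ on the compact invariant set yields an $a$-invariant measure of positive entropy, and the measure classification theorem of \cite{EKL} then forces this measure to be invariant under a unipotent subgroup and ultimately to be the Haar measure of $X_{d+1}$, which cannot be compactly supported. The paper explicitly flags this in the remark following the corollary: the ``highly non trivial part'' of the theorem is hidden here, namely the use of the \cite{EKL} measure classification. Your proposal replaces this input with a heuristic (``zero entropy is forced by non-divergence on a compact invariant set'') that is not a theorem, so the proof as written has a genuine hole.
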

\begin{proof}
In the proof of Proposition 4.1 from ~\cite{EK}, it is shown that there cannot be an open cone $C\subset A_{d+1}$ that acts on a compact invariant subset of $X_{d+1}$ such that some element in $C$ acts with positive topological entropy. Thus by lemma ~\ref{dimension and entropy}, positivity of the dimension leads to a contradiction.  
\end{proof}
\noindent \textbf{Remark}: The highly non trivial part of the proof of theorem ~\ref{theorem:the set of exceptions to Bigwood conjecture} is hidden in the proof of corollary ~\ref{dimension=0}. This is the use of the measure classification from ~\cite{EKL}. 
\begin{proof}[Proof of theorem ~\ref{theorem:the set of exceptions to Bigwood conjecture}]
As the embedding of $Y_d$ in $X_{d+1}$ \eqref{def:tau} is bi-Lipschitz, lemma ~\ref{lemma:unboundedness imply LP}  will imply the theorem once we show that for any compact $L\subset \tau(Y_d)$ 
\begin{equation}\label{eq:1.31}
\dim\left(L_{A_{d+1}^+}\right)\le d-1.
\end{equation}
We use lemma ~\ref{dimension and entropy} and corollary ~\ref{dimension=0} with the following choices: We take $$C_1=A_{d+1}^+,\quad 
a_1=diag\left(2,\dots ,d+1,1/(d+1)!\right),\quad a_2=diag\left(d+1,\dots ,2,1/(d+1)!\right)$$
$$V_1=\left\{
 \left(
\begin{array}{lllll}
0&\dots&&&0\\
\star &\ddots &&&\vdots\\
\vdots &\ddots & & &\\
 \star&\dots&\star &\ddots &\\
 0&\dots& &0&0
 \end{array}\right)\in\gog_{d+1}\right\}, V_2=\mathfrak{u}(a_2)^+=\left\{
\left(
\begin{array}{llll}
0&\star&\dots&\star\\
\vdots &\ddots & \ddots &\vdots\\
 & & & \star\\
 0&\dots & &0
 \end{array}\right)\in\gog_{d+1}\right\}.$$
Note that indeed $V_1\subset \mathfrak{u}^-(a_2)$, thus we choose $C_2$ to be an open cone containing $a_2$ and contained in $C_1$, such that for any $b\in C_2, V_1\subset\mathfrak{u}^-(b)$. Finally we take $K=\Om L$, where $\Om$ is a symmetric compact neighborhood of the identity in $A_d$. Corollary  ~\ref{dimension=0} now tells us that there exists $\del>0,$ such that for any $x\in K$ 
\begin{equation}\label{eq:1.32}
\dim\left(\exp B_\del^{V_1} \exp B_\del^{V_2}\cdot x\cap K_{C_1}\right)=0.
\end{equation} 
Note that for any $x\in L$, $\Om\exp B_\del^{V_1}\exp B^{V_2}_\del x$ is a compact neighborhood of $x$ in $\tau(Y_d)$. Cover  $L$ by finitely many sets of the form $\Om\exp B_\del^{V_1}\exp B^{V_2}_\del x_i$, for a suitable choice of points $x_i\in L$. It follows that $L_{C_1}$ is a finite union of sets of the form $\left(\Om\exp B_\del^{V_1}\exp B^{V_2}_\del x_i\right) \cap L_{C_1}.$ Thus ~\eqref{eq:1.31} will follow from ~\eqref{eq:1.32}, once we show that for any $i$ 
\begin{equation}\label{eq:1.3 1}
 \left(\Om\exp B_\del^{V_1}\exp B^{V_2}_\del x_i\right) \cap L_{C_1}\subset\Om\left(\exp B_\del^{V_1}\exp B^{V_2}_\del x_i\cap K_{C_1}\right).
\end{equation} 
This is true because if $y=a\exp X\exp Y x_i$ is an element of the left hand side of ~\eqref{eq:1.3 1}, then  $C_1y\subset L$ and if we define $y'=a^{-1}y=\exp X\exp Y x_i$, then $C_1y'\subset a^{-1}L\subset K$ and so $ay'=y$ is an element of the right hand side of ~\eqref{eq:1.3 1}. 
\end{proof}
\begin{proof}[Proof of lemma ~\ref{dimension and entropy}] 
Note that from the fact that $V_1\subset\mathfrak{u}^-(a_2)$ it follows that the sum $V_1+V_2$ is direct. Let $V_3$ be any subspace of $\gog_{d+1}$ such that $\gog_{d+1}=V_1\oplus V_2 \oplus V_3$. Choose $\del=\del(K)$ to be a bi-Lipschitz radius for $K$ with respect to the above decomposition (see \S\S~\ref{Metric conventions} for notation). Assume also that $\del$ satisfies the conclusion of lemma ~\ref{technical lemma} with $a=a_1$. For the sake of brevity we denote $B_i=B_\del^{V_i}$. Assume $x\in K$ satisfies
\begin{equation}\label{eq:3.21}
\dim\left(\exp B_1\exp B_2 x\cap K_{C_1}\right)>0.
\end{equation}
Since $\del$ is a bi-Lipschitz radius for $K$ (with respect to the decomposition $V_1\oplus V_2\oplus V_3$), ~\eqref{eq:3.21} implies that the dimension of 
$$F=F(\del)=\left\{(X,Y)\in B_1\times B_2 : \exp X\exp Y x\in K_{C_1}\right\}$$
is positive. From the choice of the norm on $\gog_{d+1}$ (see \S\S~\ref{Metric conventions}) and from the assumption that for any $b\in C_2$, $V_1\subset \mathfrak{u}^-(b),$  it follows that for any $X\in V_1$, $||Ad_b(X)||\le||X||$. Choose a compact set $\tilde{K}\supset \exp(B_{\del}^{\gog_{d+1}}) K$. Denote by $\pi_2$ the projection from $B_1\times B_2$ to $B_2$. There are two cases:\\
\textbf{Case 1}: Assume $\dim\pi_2(F)>0$. We claim that $\exp\left(\pi_2(F)\right)x\subset\tilde{K}_{C_2}$. To see this, note that if $Y\in \pi_2(F)$ then there exists some $X\in B_1$ such that $\exp X\exp Y x\in K_{C_1}$, and so for any $b\in C_2$ we have
$$b\exp Yx=\exp Ad_b(-X)b\exp X\exp Yx\in \exp (B_{\del}^{\gog_{d+1}}) K\subset \tilde{K}.$$
Now $\exp\left(\pi_2(F)\right)x\subset\exp B_2\cdot x\cap \tilde{K}_{C_2}$ and therefore, positivity of the dimension of $\pi_2(F)$, implies the positivity of the dimension of $\exp B_2\cdot x\cap \tilde{K}_{C_2}$. We apply lemma ~\ref{lemma 4.2 from EK} and conclude that $a_2$ acts with positive topological entropy on $\tilde{K}_{C_2}$.\\
\textbf{Case 2}: Assume $\dim\pi_2(F)=0$ and let us denote $\dim F= 3\rho$ with $\rho>0$. We will show that $a_1$ acts with positive topological entropy on $K_{C_1}$. 
Recall the notation of lemma ~\ref{technical lemma} (applied to $a_1$). 
We shall find for arbitrarily large integers $n$, sets $S_n\subset F$ with the following properties:
\begin{itemize}
\item For any pair of distinct points $(X_i,Y_i)\in S_n, i=1,2$ 
\begin{equation}\label{eq:two points}
||X_1-X_2||>\lam^{-n}, ||Y_1-Y_2||<\frac{\lam^{-n}}{M}.
\end{equation} 
\item $|S_n|> M^{-\rho}\lam^{{n}\rho}.$
\end{itemize} 
Given two distinct points in $S_n$, $(X_i,Y_i), i=1,2$, let us analyze the rate at which $\exp X_i\exp Y_i x$ drift apart from each other under the action of powers of $a_1$.
For any $j\ge 0$ we have that $a_1^j\exp X_i\exp Y_i x\in K$ by the definition of $F$, and so, if the distance between these two points is less than $\del$ (which is also an isometry radius for $K$), we have 
\begin{equation}\label{eq:drifting}
\begin{array}{l}
d(a_1^j\exp X_1\exp Y_1 x,a_1^j\exp X_2\exp Y_2 x)= \\
d(a_1^j\exp X_1\exp Y_1 ,a_1^j\exp X_2\exp Y_2 ) .  
\end{array} 
\end{equation}
By lemma \ref{technical lemma}, for any $k$ such that for all $0\le j\le k$, the expressions in ~\eqref{eq:drifting} are smaller than $\del$, we have 
$$d(a_1^j\exp X_1\exp Y_1x,a_1^j\exp X_2\exp Y_2)\ge c\lam^k||X_1-X_2||>c\lam^{k-n}.$$
In particular, if we set $\eps_0= \min\{c,\del\}$ then we must have some $0\le j\le n$ for which 
$$d(a_1^j\exp X_1\exp Y_1x,a_1^j\exp X_2\exp Y_2x)>\eps_0.$$
This means that $\left\{\exp X\exp Y x : (X,Y)\in S_n\right\}$ is an $(n,\eps_0)$-separated set for $(K_{C_1},a_1)$. From here, it is easy to derive the positivity of the entropy by the bound we have on the size of $S_n$: $$h_{top}(K_{C_1},a_1)\ge\limsup\frac{1}{n}\log|S_n|\ge\lim_n \frac{1}{n}\log (M^{-\rho}\lam^{{n}\rho})=\rho\log \lam>0.$$
\quad \\
To build the sets $S_n$ with the above properties, for arbitrarily large $n$'s, we argue as follows: By definition of the dimension one can find a sequence $\eps_k\searrow 0$ such that $$\cS_{\eps_k}(F)>(1/\eps_k)^{2\rho}.$$
Choose $n_k\nearrow\infty$ such that $\lam^{-n_k}\le\eps_k<\lam^{-n_k+1}$. It follows that 
\begin{equation}\label{eq:separating estimation}
\cS_{\lam^{-n_k}}(F)\ge \cS_{\eps_k}(F)>\lam^{2n_k\rho-2\rho}.
\end{equation}
On the other hand, because we assume $\dim\pi_2(F)=0$, for any large enough $n$ $$\frac{\log(\cN_{\frac{\lam^{-n}}{M}}(\pi_2(F)))}{\log(\lam^nM)}<\rho.$$ Hence  
\begin{equation}\label{eq:s(pi(F))}
\cN_{\frac{\lam^{-n}}{M}}(\pi_2(F))<\lam^{n\rho}M^\rho.
\end{equation}
Denote $N_k=\cN_{\frac{\lam^{-n_k}}{M}}(\pi_2(F))$ and let $E_i^{(k)}, i=1\dots N_k$ be a covering of $\pi_2(F)$ by subsets of $B_2$ of diameter less than $\frac{\lam^{-n_k}}{M}$. Since $N_k<\lam^{n_k\rho}M^\rho$, by \eqref{eq:separating estimation} and the pigeon hole principle, there must exist some $1\le i_k\le N_k$ with
\begin{equation}\label{eq:s(pi^{-1})}
\cS_{\lam^{-n_k}}(\pi_2^{-1}(E^{(k)}_{i_k})\cap F)>\lam^{{n_k}\rho}M^{-\rho}.
\end{equation}
Define $S_{n_k}$ to be a maximal $\lam^{-n_k}$-separated set in $\pi_2^{-1}(E^{(k)}_{i_k})\cap F$. By construction, $S_{n_k}$ has the desired properties. 
\end{proof}
\begin{proof}[Proof of corollary ~\ref{corollary 1}] The projection $\pi:Y_d\to X_d$ cannot increase dimension and therefor $\pi(\cE_d)$, the set of lattices which are not GL is a countable union of sets of upper box dimension $\le d-1$.\\
Denote by $p:X_d\times \bR^d\to Y_d$ the map $p(x,v)=x+v$. It is bi-Lipschitz with a countable fiber and so if we denote by $p_2:X_d\times \bR^d\to \bR^d$ the natural projection, then $p_2(p^{-1}(\cE_d))$, the set of vectors which are not GL, is a countable union of sets of upper box dimension $\le d-1$.
\end{proof}
\begin{proof}[Proof of corollary ~\ref{corollary 2}] Assume by way of contradiction that there exist $x\in X_d$ with $$\dim\pa{\pi^{-1}(x)\cap \cE_d}>0.$$  It follows that if $\Om \subset A_d$ is a compact neighborhood of the identity, then $$\Om\pa{\pi^{-1}(x)\cap \cE_d}>0 ,$$ has dimension greater than $\dim A_d=d-1$. A contradiction to theorem ~\ref{theorem:the set of exceptions to Bigwood conjecture}.
\end{proof}
\begin{proof}[Proof of corollary ~\ref{corollary 3}] 
Positivity of the dimension of a subset $L\subset X_d$, transverse to the $A_d$ orbits, means that $A_dL$ contains a compact set of dimension greater then $\dim A_d=d-1$. By theorem  ~\ref{theorem:the set of exceptions to Bigwood conjecture}, such a set must contain a GL lattice. If the dimension of the $A_d$ orbit closure of a lattice $x\in X_d$ is greater then $d-1$, then it contains a GL lattice. It now follows from proposition ~\ref{proposition 2}, that $x$ is GL.
\end{proof}
%------------------------------Lattices That Satisfy Bigwood
\section{Lattices that satisfy GLC}
\label{sec:lattices that satisfy Bigwood}
In this section we shall explicitly build L lattices in $\bR^d$ for $d\ge 3$. In fact these lattices will possess a much stronger property, namely:
\begin{definition}\label{FLP}
A grid $y\in Y_d$ is FL (L of finite type) if there exist a non zero integer $n$ such that $N(ny)=0$. A lattice $x\in X_d$ is GFL if any $y\in\pi^{-1}(x)$ is FL.
\end{definition}
\begin{definition}\label{rational grids}
A grid $y\in Y_d$ is rational if $y$ is a torsion point in $\pi^{-1}(\pi(y))$.
\end{definition}
The following list of observations is left to be verified by the reader. 
\begin{proposition}\label{observation}
\begin{enumerate}
\item The set of FL grids is $A_d$ invariant.
\item If $y,y_0\in Y_d$, $y_0\in \overline{A_dy}$ and $y_0$ is FL then $y$ is FL too.
\item If $x,x_0\in X_d$, $x_0\in \overline{A_dx}$ and $x_0$ is GFL then $x$ is GFL too.
\item Any rational grid is FL.
\item If $x_1\subset X_{d_1}$ is GFL and $x_2\in X_{d_2}$ is any lattice, then $x_1\oplus x_2\in X_{d_1+d_2}$ is GFL.
\item If $x_1,x_2\in X_d$ are such that $x_1$ is GFL and there exist some $c>0$ such that $cx_1$ is commensurable with $x_2$ then $x_2$ is GFL.
\item The standard lattice $\bZ^d$ is not GFL. In fact, for any vector $v\in\bR^d$, all of whose coordinates are irrationals we have that $\bZ^d+v$ is not FL.
\end{enumerate} 
\end{proposition}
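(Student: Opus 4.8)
All seven assertions reduce to three elementary facts about $N$ together with bookkeeping: (i) $N$ is invariant under the $A_d$-action on $Y_d$ and homogeneous of degree $d$ under scaling, i.e.\ $N(cw)=c^{d}N(w)$ for $c>0$; (ii) multiplication by an integer $n$ on $Y_d$ is continuous and commutes with the $A_d$-action, indeed $n(ay)=a(ny)$; and (iii) the function $N\colon Y_d\to\bR$ defined in \eqref{mu} is \emph{upper} semicontinuous, being locally an infimum of the continuous functions $w\mapsto\av{N(w)}$ as $w$ runs over a continuously varying grid. Granting these, part (1) is immediate: if $N(ny)=0$ then $N(n(ay))=N(a(ny))=N(ny)=0$. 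For part (2), (ii) turns $y_0\in\overline{A_dy}$ into $ny_0\in\overline{A_d(ny)}$, and then (iii) together with the $A_d$-invariance of $N$ gives $N(ny)\le N(ny_0)$; taking $n$ to be the integer witnessing that $y_0$ is FL yields $N(ny)=0$. Part (3) follows from part (2) exactly as Proposition~\ref{proposition 2}(2) follows from Proposition~\ref{proposition 2}(1): given $y\in\pi^{-1}(x)$ and $a_k\in A_d$ with $a_kx\to x_0$, compactness of the fibres of $\pi$ lets one extract a limit $y_0$ of $a_ky$ lying in $\pi^{-1}(x_0)\cap\overline{A_dy}$, which is FL because $x_0$ is GFL, hence $y$ is FL.

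\textbf{Parts (4) and (5).} For (4): a rational grid $y$ satisfies $ny=\pi(y)$ for some $n\ne0$ (the torsion relation in the torus $\pi^{-1}(\pi(y))$), and since the lattice $\pi(y)$ contains the zero vector, $N(ny)=N(\pi(y))=0$. For (5): write a grid over $x_1\oplus x_2$ as $y=(x_1\oplus x_2)+(v_1,v_2)$ with $v_i\in\bR^{d_i}$, so that $ny$ consists of the pairs $(w_1,w_2)$ with $w_1\in x_1+nv_1$ and $w_2\in x_2+nv_2$, on which $N(w_1,w_2)=N(w_1)N(w_2)$. Applying the GFL property of $x_1$ to the grid $x_1+v_1\in Y_{d_1}$ produces a nonzero $n$ and vectors $w_1\in x_1+nv_1$ with $\av{N(w_1)}$ arbitrarily small; pairing each with a fixed $w_2\in x_2+nv_2$ gives $(w_1,w_2)\in ny$ with $\av{N(w_1,w_2)}$ arbitrarily small, so $N(ny)=0$.

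\textbf{Part (6).} Two observations suffice. First, FL (hence GFL) is scale invariant: for $c>0$ one has $n(cy)=c(ny)$, whence $N(n(cy))=c^{d}N(ny)$ by homogeneity, and every grid over $cx$ is $cy$ for a grid $y$ over $x$. Second, GFL passes to overlattices: if $x''\subseteq x'$ and $x''$ is GFL, then for any grid $y'=x'+v$ the grid $x''+v$ is FL with some witness $n$, and $n(x''+v)\subseteq ny'$ as subsets of $\bR^d$ forces $N(ny')\le N(n(x''+v))=0$. Given these, commensurability of $cx_1$ with $x_2$ supplies a positive integer $e$ — the exponent of the finite group $cx_1/(cx_1\cap x_2)$ — with $ecx_1\subseteq x_2$; since $x_1$ is GFL, so is $ecx_1$ by scale invariance, and then $x_2$ is GFL by the overlattice property. (Working through the overlattice step rather than also through a sublattice step keeps the argument inside the unimodular picture, though FL and GFL make sense for arbitrary lattices anyway.)

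\textbf{Part (7) and the main point of care.} Fix $v$ with all $v_i$ irrational and $n\ne0$. A vector of $\bZ^d+nv$ has $i$-th coordinate $m_i+nv_i$ with $m_i\in\bZ$ free, so $\inf\bigl\{\av{N(w)}:w\in\bZ^d+nv\bigr\}$ factors coordinatewise as $\prod_{i=1}^{d}\idist{nv_i}$; each $nv_i$ is irrational, hence not an integer, so $\idist{nv_i}>0$, and therefore $N\bigl(n(\bZ^d+v)\bigr)>0$ for every $n\ne0$. Thus $\bZ^d+v$ is not FL and $\bZ^d$ is not GFL. The only step that is not pure bookkeeping is part (2): one must observe that $N$ is merely upper semicontinuous and that this is precisely the direction needed — a limit of grids cannot have \emph{larger} $N$ — which is exactly what makes an inheritance statement of the form ``$y_0$ L $\Rightarrow$ $y$ L'' run the right way.
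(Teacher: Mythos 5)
Your verifications are correct, and since the paper simply leaves Proposition~\ref{observation} ``to be verified by the reader'', they are exactly the intended elementary arguments: part (1) from $A_d$-invariance of $N$ and $n(ay)=a(ny)$, parts (2)--(3) mirroring the proof of Proposition~\ref{proposition 2} via upper semicontinuity of $N$ and compactness of the fibres of $\pi$, and parts (4)--(7) by the coordinatewise/scaling bookkeeping you carry out. One cosmetic slip: your closing slogan ``a limit of grids cannot have larger $N$'' actually describes lower semicontinuity, whereas the inequality you in fact use, $N(ny)\le N(ny_0)$ when $ny_0\in\overline{A_d(ny)}$ and $N$ is constant along the approximating orbit, is the correct consequence of upper semicontinuity, so nothing in the argument is affected.
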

For $x\in X_d$, denote by $A_{d,x}$ its stabilizer in $A_d$. Note that $A_{d,x}$ acts on the torus $\pi^{-1}(x)$ as a group of automorphisms. 
From (2) and (4) of proposition ~\ref{observation}, we deduce the following
\begin{lemma}\label{stabilizer}
If for any grid $y\in \pi^{-1}(x)$, $\overline{A_{d,x}y}\subset \pi^{-1}(x)$ contains an FL grid then $x$ is GFL. In particular, if for any grid $y\in \pi^{-1}(x)$, $\overline{A_{d,x}y}$ contains a rational grid then $x$ is GFL.
\end{lemma}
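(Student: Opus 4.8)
The plan is to deduce the lemma directly from parts (2) and (4) of Proposition~\ref{observation}, together with the defining property of GFL lattices. First I would fix a lattice $x\in X_d$ satisfying the hypothesis and an arbitrary grid $y\in\pi^{-1}(x)$; the goal is to show $y$ is FL, since $x$ is GFL precisely when every such $y$ is. By assumption there is an FL grid $y_0\in\overline{A_{d,x}y}$. The key observation is the chain of inclusions $A_{d,x}\subset A_d$, which gives $\overline{A_{d,x}y}\subset\overline{A_dy}$, so that $y_0\in\overline{A_dy}$. Applying part (2) of Proposition~\ref{observation} to the pair $y,y_0$ then yields that $y$ is FL. As $y\in\pi^{-1}(x)$ was arbitrary, $x$ is GFL.

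For the second assertion, I would simply note that if $\overline{A_{d,x}y}$ contains a rational grid, then by part (4) of Proposition~\ref{observation} that rational grid is in particular FL, so the hypothesis of the first assertion is met and the conclusion follows.

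There is essentially no serious obstacle here; the only point deserving a word is that $\overline{A_{d,x}y}$ genuinely sits inside the fiber $\pi^{-1}(x)$ (as the lemma statement asserts), which is immediate because $A_{d,x}$ fixes $x$, hence preserves the closed set $\pi^{-1}(x)$, so the orbit and its closure stay in the fiber; one could also invoke compactness of the fibers of $\pi$ as in the proof of Proposition~\ref{proposition 2}. This is what makes it legitimate to speak of $\overline{A_{d,x}y}$ containing a rational grid in the sense of Definition~\ref{rational grids}. Beyond that, the argument is a one-line application of the inheritance property (2).
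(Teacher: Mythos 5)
Your argument is correct and is exactly the deduction the paper intends: the lemma is stated as a consequence of parts (2) and (4) of Proposition~\ref{observation}, using $\overline{A_{d,x}y}\subset\overline{A_dy}$ just as you do. Nothing further is needed.
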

Recall that a group of automorphisms of a torus $\pi^{-1}(x)$ ($x\in X_d$) is called ID, if any infinite invariant set is dense. The following is a weak version of theorem 2.1 from ~\cite{B}:
\begin{theorem}[Theorem 2.1 \cite{B}]\label{ID}
If the stabilizer $A_{d,x}$ of  a lattice $x\in X_d$ under the action of $A_d$ satisfies
\begin{enumerate}
\item There exist some $a\in A_{d,x}$ such that for any $n$ the characteristic polynomial of $a^n$ (which is necessarily over $\bQ$) is irreducible.
\item For each $1\le i\le d$ there exist $a=diag(a_1\dots a_d)\in A_{d,x}$ with $a_i\ne 1$.
\item There exists $a_1,a_2\in A_{d,x}$ which are multiplicatively independent (that is $a_1^ka_2^m=1\Rightarrow k=m=0$). 
\end{enumerate} 
then $A_{d,x}$ is ID.
\end{theorem}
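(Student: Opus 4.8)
This is Theorem~2.1 of \cite{B}; I only indicate how one would argue. Write $T=\bR^d/x$ for the torus on which $\Sigma:=A_{d,x}$ acts by automorphisms. Two preliminary observations: first, since every element of $\Sigma$ is diagonal in the standard basis, the common eigendirections of $\Sigma$ are exactly the coordinate axes $\bR e_i$; second, the element $a$ of hypothesis (1) has positive eigenvalues $a_1,\dots,a_d$ of product $1$ and an irreducible characteristic polynomial of degree $d\ge 3$, so $1$ is not an eigenvalue, whence $a$ is a hyperbolic (hence ergodic) toral automorphism --- as is each power $a^n$ by hypothesis (1) --- and, being rationally irreducible, $a$ has no proper rational invariant subspace, so $T$ has no proper $a$-invariant, a fortiori no proper $\Sigma$-invariant, subtorus. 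Now fix an infinite closed $\Sigma$-invariant set $X\subseteq T$; the aim is $X=T$.

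The plan is to split this into a ``blow-up'' step and a rigidity step. For the blow-up: since $X$ is infinite and compact it has an accumulation point $p$, so one may choose $x_k\in X\setminus\{p\}$ with $x_k\to p$ and set $v_k:=x_k-p\to 0$, lifted to small vectors of $\bR^d$. I would then apply well-chosen $b_k\in\Sigma$: as $b_k$ is linear, $b_kx_k-b_kp=b_kv_k$, and one wants $\|b_kv_k\|$ to stay bounded away from $0$ and $\infty$ and, along a subsequence, $b_kv_k\to w\neq 0$ while $b_kp\to q\in X$; then $q$ and $q+w$ both lie in $X$, and iterating the construction on $(q+w,q)$ together with the $\Sigma$-invariance of $X$ forces a coset of a nontrivial connected subtorus into $X$. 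The freedom to pick the $b_k$ is what hypotheses (2) and (3) supply: putting $\ell(a)=(\log a_1,\dots,\log a_d)$, the group $\Lambda=\{\ell(a):a\in\Sigma\}$ lies in the hyperplane $\{\sum t_i=0\}$ and, by the multiplicative independence in (3), has rank $\ge 2$, so its set of directions is dense in a subsphere; this is enough room to expand $v_k$ in its dominant coordinate up to unit scale while keeping the transverse coordinates of $b_kv_k$ small --- impossible with a single automorphism, available in rank $\ge 2$ --- and hypothesis (2) ensures that whatever coordinate direction $v_k$ concentrates along is genuinely moved by some element of $\Sigma$, so that $w\neq 0$.

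For the rigidity step: once $X$ contains a coset of a nontrivial connected subtorus, let $H$ be the largest connected closed subgroup of $T$ a coset of which lies in $X$. Using the $\Sigma$-invariance of $X$, the ergodic element $a$, and the rank-$\ge 2$ freedom of the blow-up applied transversally to $H$, one shows $H$ must itself be $\Sigma$-invariant, since otherwise a strictly larger such subgroup would appear. But a $\Sigma$-invariant subtorus has a rational $a$-invariant tangent space, so by the irreducibility in (1) it is all of $T$; hence $X$ contains a translate of $T$, i.e. $X=T$. Thus every infinite closed $\Sigma$-invariant subset of $T$ equals $T$, so every infinite $\Sigma$-invariant subset of $T$ is dense --- that is, $A_{d,x}$ is ID.

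The hard part is the blow-up: arranging the $b_k$ so that the magnified direction, the decay of the transverse coordinates, and the behaviour of the basepoints $b_kp$ are all controlled simultaneously and uniformly in $k$ is the delicate point, and it is precisely there that higher rank is essential; the rigidity step is comparatively soft, relying only on the absence of proper $\Sigma$-invariant subtori guaranteed by hypothesis (1). For complete details I refer to \cite{B}.
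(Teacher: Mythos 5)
This statement is imported verbatim (as a weakened form of Theorem~2.1 of \cite{B}) and the paper offers no proof of it at all --- it is used as a black box --- so your ultimate deferral to \cite{B} is exactly what the paper itself does, and nothing more is required at this point of the argument. Your surrounding sketch is a fair description of the standard strategy behind Berend-type rigidity: hypothesis (1) gives a totally irreducible (hence hyperbolic, with no invariant rational subspace) element, hypotheses (2)--(3) give the higher-rank expansion needed for the blow-up of small differences in an infinite closed invariant set, and one then upgrades a coset of a nontrivial subtorus to all of $T$. One caveat if you ever intended the sketch to stand on its own: the rigidity step as phrased is shaky --- ``the largest connected closed subgroup a coset of which lies in $X$'' need not be unique, and if $\sigma H\ne H$ the two cosets $x_0+H\subset X$ and $\sigma x_0+\sigma H\subset X$ do not directly produce a coset of a larger subgroup inside $X$; in Berend's argument this is where one applies powers of the totally irreducible element to the coset and uses a limiting/equidistribution argument for the images $a^n(x_0+H)$, which is genuine work rather than a formal maximality argument. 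As a cited external result with an indicative outline, however, your treatment is consistent with the paper.
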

We now turn to the construction of a family of GFL lattices. Let $K$ be a totally real number field of degree $d$ over $\bQ$.
\begin{definition}
\begin{enumerate}
\item A lattice in $K$ is the $\bZ$-span of a basis of $K$ over $\bQ$.
\item If $\Lam$ is a lattice in $K$ then its associated order is defined as $O_\Lam=\left\{x\in K : x\Lam\subset \Lam\right\}.$ 
\end{enumerate}
\end{definition}
It can be easily verified that for any lattice $\Lam$ in $K$, $O_\Lam$ is a ring. Moreover, the units in this ring are exactly $O_\Lam^*=\left\{\om\in K : \om\Lam=\Lam\right\}$. Dirichlet's unit theorem states the following
\begin{theorem}[Dirichlet's unit theorem]
For any lattice $\Lam$ in $K$, the group of units $O_\Lam^*$ is isomorphic to $\{\pm1\}\times\bZ^{d-1}.$ 
\end{theorem}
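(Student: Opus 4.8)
The plan is to run the classical Minkowski argument through the logarithmic embedding, with a little extra care because $O_\Lam$ is the order of an arbitrary lattice rather than the maximal order. First I would reduce to an order: multiplication by $x\in O_\Lam$ is a $\bZ$-linear endomorphism of $\Lam\cong\bZ^d$, so $x$ satisfies a monic integer polynomial and is an algebraic integer; hence $R:=O_\Lam$ is itself a full $\bZ$-lattice in $K$ consisting of algebraic integers, it is a ring, its associated order is itself, and $O_\Lam^*=\set{\om\in K:\om R=R}=:R^*$. Since $K$ is totally real of degree $d$, fix its $d$ real embeddings $\sigma_1,\dots,\sigma_d:K\hra\bR$. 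If $\om\in R^*$ then $\om,\om^{-1}\in R$, so $N_{K/\bQ}(\om)=\prod_i\sigma_i(\om)$ is a rational integer dividing $1$, i.e. $N_{K/\bQ}(\om)=\pm1$. Define the group homomorphism $L:R^*\to\bR^d$, $L(\om)=(\log\av{\sigma_1(\om)},\dots,\log\av{\sigma_d(\om)})$; the norm computation puts its image in the hyperplane $H:=\set{(x_1,\dots,x_d)\in\bR^d:\sum_i x_i=0}$, which has dimension $d-1$.

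Next I would identify $\ker L$ and establish discreteness of $L(R^*)$. If $L(\om)=0$ then $\om\in O_K$ has all archimedean absolute values equal to $1$, so by Kronecker's theorem $\om$ is a root of unity; a totally real field has no roots of unity besides $\pm1$, so $\ker L=\set{\pm1}$. For discreteness: the $L$-preimage of any bounded subset of $\bR^d$ consists of algebraic integers of degree $\le d$ with uniformly bounded conjugates, hence with uniformly bounded (integral) characteristic polynomials, of which there are finitely many; so $L(R^*)$ is a discrete subgroup of $H$, hence free abelian of some rank $r\le d-1$. The extension $1\to\set{\pm1}\to R^*\to\bZ^r\to1$ splits because $\bZ^r$ is free, giving $R^*\cong\set{\pm1}\times\bZ^r$; it remains to prove $r=d-1$.

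The statement $r=d-1$, i.e. that $L(R^*)$ spans $H$, is the only genuinely non-formal point and rests on the geometry of numbers. Embed $K$ in $\bR^d$ by $\iota(x)=(\sigma_1(x),\dots,\sigma_d(x))$; then $\iota(R)$ is a full lattice, of covolume $\kappa$ say. By Minkowski's convex body theorem, whenever $c_1,\dots,c_d>0$ satisfy $\prod_j c_j>\kappa$ there is a nonzero $\alpha\in R$ with $\av{\sigma_j(\alpha)}\le c_j$ for all $j$; then $1\le\av{N_{K/\bQ}(\alpha)}=\prod_j\av{\sigma_j(\alpha)}\le\prod_j c_j$, which forces $\av{\sigma_j(\alpha)}\ge c_j/\prod_k c_k$ as well, so $\log\av{\sigma_j(\alpha)}=\log c_j+O(1)$ with the error uniformly bounded. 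Fix an index $i$ and a bound $B>\kappa$, and let $(\log c_1,\dots,\log c_d)$ range over a sequence with $\sum_j\log c_j=\log B$ throughout, tending to infinity in the direction $v_i\in H$ whose $i$-th coordinate is $-1$ and whose other coordinates are $\tfrac{1}{d-1}$. Each term of the sequence yields a nonzero $\alpha\in R$ with $\av{N_{K/\bQ}(\alpha)}\le B$; since $[R:\alpha R]=\av{N_{K/\bQ}(\alpha)}$ is a bounded nonzero integer and $\bZ^d$ has finitely many subgroups of bounded index, the ideals $\alpha R$ take finitely many values, so infinitely many of these $\alpha$'s generate one and the same ideal. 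Taking two of them, $\alpha$ far out along the sequence and $\alpha'$ near its start, their ratio $u=\alpha/\alpha'\in R^*$ has $j$-th log-coordinate $\log\av{\sigma_j(\alpha)}-\log\av{\sigma_j(\alpha')}=t(v_i)_j+O(1)$ for an arbitrarily large $t>0$; so for $t$ large we obtain a unit $\om_i\in R^*$ with $(L(\om_i))_i<0$ and $(L(\om_i))_j>0$ for all $j\ne i$. Finally, if $(a_1,\dots,a_d)\in\bR^d$ is orthogonal to all of $L(\om_1),\dots,L(\om_d)$ and $a_{j_0}=\max_j a_j$, then $\sum_j(L(\om_{j_0}))_j=0$ gives $0=\sum_j a_j(L(\om_{j_0}))_j=\sum_{j\ne j_0}(a_j-a_{j_0})(L(\om_{j_0}))_j$, a sum of terms each $\le0$, forcing $a_j=a_{j_0}$ for every $j$; hence the vectors orthogonal to all the $L(\om_i)$ are exactly the multiples of $(1,\dots,1)$, so $L(\om_1),\dots,L(\om_d)$ span $H$. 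Therefore $r=d-1$ and $O_\Lam^*=R^*\cong\set{\pm1}\times\bZ^{d-1}$.

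The step I expect to demand the most care is the bookkeeping in the Minkowski argument --- choosing the varying boxes along the ray and extracting a single clean unit $\om_i$ with the prescribed sign pattern from the pigeonhole step --- which is exactly the classical computation; everything else is formal manipulation. I would also note that nothing above uses maximality of $R$, only that $R$ is a full $\bZ$-lattice in $K$ that happens to be a ring, so the proof covers all orders uniformly; alternatively one may simply invoke the standard treatment of unit groups of orders (e.g. in Borevich--Shafarevich or Neukirch), of which this is the totally real case.
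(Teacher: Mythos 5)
Your proof is correct and complete: it is the standard proof of Dirichlet's unit theorem via the logarithmic embedding and Minkowski's convex body theorem, with the right adjustments (norm of a unit is $\pm1$ because $\om,\om^{-1}$ are both algebraic integers; $[R:\alpha R]=\av{N_{K/\bQ}(\alpha)}$ for the pigeonhole on principal ideals) so that it applies to the possibly non-maximal order $O_\Lam$ and not just to $O_K$. The paper itself states this theorem as a known classical fact and gives no proof, so there is nothing to compare against; your argument fills that gap in the intended generality.
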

Let $\sig_1\dots\sig_d$ be an ordering of the different embeddings of $K$ into the reals. Define $\vphi:K\to\bR^d$ to be the map whose $i$'th coordinate is $\sig_i$. If we endow $\bR^d$ with the structure of an algebra (multiplication defined coordinatewise), then $\vphi$ becomes a homomorphism of $\bQ$ algebras (here we think of the fields $\bQ,\bR$ as embedded diagonally in $\bR^d$). It is well known that if $\Lam$ is a lattice in $K$, then $\vphi(\Lam)$ is a lattice in $\bR^d$. Let us denote by $x_\Lam$ the point in $X_d$ obtained by normalizing the covolume of $\vphi(\Lam)$ to be $1$. We refer to such a lattice as a \textit{lattice coming from a number field}.  Because $\vphi$ is a homomorphism
$$\vphi(O_\Lam^*)\subset \left\{a\in\bR^d : ax_\Lam=x_\Lam\right\}.$$
We can identify the linear map obtained by left multiplication by $a\in\bR^d$ on $\bR^d$ with the usual action of the diagonal matrix whose entries on the diagonal are the coordinates of $a$. We abuse notation and denote the corresponding matrix by the same symbol. After recalling that the product of all the different embeddings of a unit in an order equals $\pm1$ we get that in fact
$\vphi(O_\Lam^*)$ is a subgroup of the stabilizer of $x_\Lam$ in the group of diagonal matrices  of determinant $\pm 1$ (in fact there is equality here but we will not use it). To get back into $SL_d$ we replace $O_\Lam^*$ by the subgroup $O_{\Lam,+}^*$ of totally positive units (that is units, all of whose embeddings are positive). It is a subgroup of finite index in $O_\Lam^*$. We conclude that $\vphi$ will map $O_{\Lam,+}^*$ into $A_{d,x_\Lam}$ (using our identification of vectors and diagonal matrices). 
\begin{lemma}\label{lemma:g_Lam is ID}
If $x_\Lam\in X_d$ is a lattice coming from a totally real number field $K$ of degree $d\ge3$, then $A_{d,x_\Lam}$ is an ID group of automorphisms of $\pi^{-1}(x_\Lam)$.  
\end{lemma}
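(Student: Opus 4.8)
The plan is to deduce the lemma from Berend's criterion, Theorem \ref{ID}: I will exhibit inside $A_{d,x_\Lam}$ elements witnessing each of its three hypotheses. The ambient facts I will use were set up just above: $\vphi$ is an injective homomorphism of $\bQ$-algebras, it maps the totally positive unit group $O^*_{\Lam,+}$ of the order $O_\Lam$ into $A_{d,x_\Lam}$, and for $u\in O^*_{\Lam,+}$ the diagonal matrix $\vphi(u)=\mathrm{diag}(\sigma_1(u),\dots,\sigma_d(u))$ has characteristic polynomial $\prod_{i=1}^d\bigl(X-\sigma_i(u)\bigr)$, which is the characteristic polynomial of multiplication by $u$ on $K$; it is a monic polynomial with integer coefficients, and it is irreducible over $\bQ$ precisely when $\bQ(u)=K$. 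Finally, by Dirichlet's unit theorem (as quoted above) together with the fact that $O^*_{\Lam,+}$ is a torsion-free subgroup of finite index in $O^*_\Lam\cong\{\pm1\}\times\bZ^{d-1}$, the group $O^*_{\Lam,+}$ is free abelian of rank $d-1$; the hypothesis $d\ge3$ makes this rank at least $2$, which will be used for hypothesis (3).

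The one substantive step is hypothesis (1): I must find a totally positive unit $u$ with $\bQ(u^n)=K$ for every nonzero integer $n$ (since $\bQ(u^{-n})=\bQ(u^n)$ it suffices to treat $n\ge1$); then $a_0:=\vphi(u)$ works, because $\vphi(u)^n=\vphi(u^n)$ and $\bQ(u^n)=K$ forces the characteristic polynomial of $\vphi(u)^n$ to be irreducible. The obstacle is the union of the ``bad'' sets $H_L:=\{u\in O^*_{\Lam,+}:u^n\in L\ \text{for some}\ n\ge1\}$ as $L$ runs over the proper subfields of $K$ --- there are only finitely many such $L$. I will check that each $H_L$ is a subgroup and that $H_L/(O^*_{\Lam,+}\cap L^\times)$ is a torsion group, so $\mathrm{rank}\,H_L=\mathrm{rank}(O^*_{\Lam,+}\cap L^\times)$; and since every element of $O^*_{\Lam,+}\cap L^\times$ is a unit of the ring of integers of $L$, and $L$ is a totally real field (being a subfield of the totally real $K$) of degree $[L:\bQ]<d$, Dirichlet's unit theorem applied to $L$ gives $\mathrm{rank}\,H_L\le[L:\bQ]-1\le d-2<d-1$. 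Since a finitely generated free abelian group of rank $r$ is never a finite union of subgroups of rank $<r$ (a standard fact, obtained by tensoring with $\bQ$), the finitely many $H_L$ cannot cover $O^*_{\Lam,+}$, so the desired $u$ exists. This rank bookkeeping is where I expect the only real work to be.

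It then remains to dispatch hypotheses (2) and (3), which are immediate. For (2): the same $a_0=\vphi(u)$ works for every index $i$, because $\bQ(u)=K$ and $d\ge2$ force $u\ne1$, and each $\sigma_i$ is an injective field embedding, so the $i$-th diagonal entry $\sigma_i(u)$ of $a_0$ is $\ne1$. For (3): I pick $\bZ$-linearly independent $u_1,u_2\in O^*_{\Lam,+}$ (possible since the rank is $d-1\ge2$) and observe that $\vphi(u_1)^k\vphi(u_2)^m=\vphi(u_1^ku_2^m)$ together with the injectivity of $\vphi$ forces $k=m=0$ whenever the left-hand side is the identity, so $\vphi(u_1),\vphi(u_2)\in A_{d,x_\Lam}$ are multiplicatively independent. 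All three hypotheses of Theorem \ref{ID} then hold, and the theorem gives that $A_{d,x_\Lam}$ is ID.
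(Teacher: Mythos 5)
Your proposal is correct and takes essentially the same route as the paper: it verifies Berend's three hypotheses, with (2) and (3) immediate from Dirichlet's unit theorem and $d\ge3$, and (1) proved by showing that the finitely many ``radical'' subgroups attached to proper subfields have rank at most $d-2$ and hence cannot cover the rank-$(d-1)$ group of totally positive units. The only (harmless) difference is in how the rank of a radical is controlled: you observe that $H_L/(O^*_{\Lam,+}\cap L^\times)$ is a finitely generated torsion group, hence finite, whereas the paper bounds the exponent of $\sqrt{O_F^*}/O_F^*$ by noting that ratios of conjugates must be roots of unity of bounded order.
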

\begin{proof}
It is enough to check that conditions (1),(2),(3) from theorem \ref{ID} are satisfied. Condition (2) is trivial. Condition (3) is a consequence of Dirichlet's units theorem and the assumption $d\ge 3$. To verify condition (1) we argue as follows: We will show that there exist $\al\in O_K^*$ such that for any $n$, $\al^n$ generates $K$ (this is enough because $O_{\Lam,+}^*$ is of finite index in $O_K^*$). Let $F_1\dots F_k$ be a list of all the subfields of $K$. If we denote for a subset $B\subset K$ $$\sqrt{B}=\left\{x\in K : \exists n\textrm{ such that } x^n\in B\right\}$$ then we need to show that \begin{equation}\label{O_F}
O_K^*\setminus \cup_1^k\sqrt{O_{F_i}^*}\ne\emptyset.
\end{equation}
Fix a proper subfield $F$ of $K$. Note that the following is an inclusion of groups $O_F^*\subset \sqrt{O_F^*}\subset O_K^*$. Thus, Dirichlet's units theorem will imply ~\eqref{O_F} once we prove that $O_F^*$ is of finite index in $\sqrt{O_F^*}$. We shall give a bound on the order of elements in the quotient $\sqrt{O_F^*}/O_F^*$ thus showing that the groups are of the same rank. It is enough to show that there exist some integer $n_0$ such that if $x\in K$ satisfies $x^n\in F$ for some $n$ then $x^{n_0}\in F$. Let $x\in K$ be such an  element. Denote by $\sig_1\dots\sig_r$ the different embeddings of $F$ into the reals and for any $1\le i\le r$, denote by $\sig_{ij}, j=1\dots s$ the different extensions of $\sig_i$ to an embedding of $K$ into the reals. Thus $d=rs$ and $\sig_{ij}$ are all the different embeddings of $K$ into the reals. Note that $x^n\in F$ if and only if for any $1\le i\le r$ $\sig_{i1}(x^n)=\dots =\sig_{is}(x^n)$ i.e. if and only if $(\frac{\sig_{ij}(x)}{\sig_{ik}(x)})^n=1$ for all $i,j,k$. But since there is a bound on the order of roots of unity in $K$ we are done. 
\end{proof}
We are now in position to prove
\begin{theorem}\label{FL}
Any lattice coming from a totally real number field of degree $d\ge3$ is GFL.
\end{theorem}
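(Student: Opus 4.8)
\emph{Proof proposal.} The plan is to obtain the theorem as a short consequence of two results already established: Lemma~\ref{lemma:g_Lam is ID}, which tells us that for a lattice $x=x_\Lam$ coming from a totally real field $K$ of degree $d\ge 3$ the stabilizer $A_{d,x}$ acts as an \emph{ID} group of automorphisms of the torus $\pi^{-1}(x)$, and Lemma~\ref{stabilizer}, which reduces proving that $x$ is GFL to exhibiting, for \emph{every} grid $y\in\pi^{-1}(x)$, a rational grid inside the orbit closure $\overline{A_{d,x}y}$. So I would fix such an $x$, fix an arbitrary $y\in\pi^{-1}(x)$, and split according to whether the orbit $A_{d,x}y$ is infinite or finite.

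If $A_{d,x}y$ is infinite, then since $A_{d,x}$ is ID this orbit is dense in $\pi^{-1}(x)$, so $\overline{A_{d,x}y}$ is the whole torus; in particular it contains the trivial grid $x+0$, which is a rational grid, and Lemma~\ref{stabilizer} handles this $y$.

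If $A_{d,x}y$ is finite, I claim $y$ is itself rational. Pick any totally positive unit $\om\in O_{\Lam,+}^*$ of infinite order (one exists because $O_{\Lam,+}^*$ has rank $d-1\ge 2$ by Dirichlet's theorem) and set $a=\vphi(\om)\in A_{d,x}$. Since $a$ permutes the finite set $A_{d,x}y$, there is an integer $m\ge 1$ with $a^m y=y$. In a $\bZ$-basis of the lattice $x$ the automorphism $a^m$ is represented by a matrix $B\in GL_d(\bZ)$, which is similar over $\bR$ to $\mathrm{diag}(\sig_1(\om^m),\dots,\sig_d(\om^m))$; as $\om$ is not a root of unity and each $\sig_i$ is an injective field embedding, $\sig_i(\om^m)\ne 1$ for all $i$, hence $\det(B-I)=\prod_i(\sig_i(\om^m)-1)\ne 0$. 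Lifting $a^m y=y$ to $\bR^d$ gives $(B-I)\tilde y\in x$, so $q\tilde y\in x$ with $q=\av{\det(B-I)}\ge 1$ (using that $q(B-I)^{-1}$ has integer entries). Thus $y$ is a rational grid, hence FL by Proposition~\ref{observation}(4), and it lies in $\overline{A_{d,x}y}$; Lemma~\ref{stabilizer} again applies. Combining the two cases shows $x_\Lam$ is GFL.

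I would not expect a serious obstacle: the genuine difficulty has already been absorbed into Berend's rigidity theorem via Lemma~\ref{lemma:g_Lam is ID}. The only point needing care is the finite-orbit case, and there it is just the classical observation that a periodic point of a toral automorphism with no eigenvalue equal to $1$ must be a torsion point; verifying that the integer matrix $B$ has $\det(B-I)\ne 0$ (equivalently, that $a^m$ has no fixed eigenvector on the lattice) is what the choice of an infinite-order unit guarantees.
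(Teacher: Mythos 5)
Your proposal is correct and follows essentially the same route as the paper: reduce via Lemma~\ref{lemma:g_Lam is ID} and Lemma~\ref{stabilizer} to showing that a grid with finite $A_{d,x_\Lam}$-orbit is rational, then use a nontrivial unit fixing (a power of) the grid together with the fact that no embedding of a non-root-of-unity unit equals $1$. The paper phrases the last step by inverting $\om-1$ inside the field $K$ rather than via $\det(B-I)\ne 0$ for the integer matrix, but this is the same observation that a periodic point of a toral automorphism without eigenvalue $1$ is torsion.
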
 
\begin{proof}
Let $x_\Lam\in X_d$ be a lattice coming from a totally real number field of degree $d\ge 3$. Using lemma \ref{lemma:g_Lam is ID} and lemma \ref{stabilizer} we see that the theorem will follow if we will show that any finite $A_{d,x_\Lam}$ invariant set in $\pi^{-1}(x_\Lam)$ contain only rational grids. Assume that $y\in\pi^{-1}(x_\Lam)$ lies in a finite invariant set. It follows that there exist $e\ne a\in \vphi(O^*_{\Lam,+})$ with 
\begin{equation}\label{stabilize y}
ay=y.
\end{equation}
Write $x_\Lam=c\vphi(\Lam)$, $y=x_\Lam + v$ and $a=\vphi(\om)$. Then from \eqref{stabilize y} it follows that there exist $\theta\in\Lam$ such that in the algebra $\bR^d$ $$v(\vphi(\om)-1)=c\vphi(\theta)\Rightarrow v=c\vphi(\theta(\om-1)^{-1}).$$
Since $K$ is spanned over $\bQ$ by $\Lam$ we see that $v$ is in the $\bQ$ span of $c\vphi(\Lam)=x_\Lam$ and hence $y$ is a rational grid as desired.
\end{proof}
As a corollary of the ergodicity of the $A_d$ action on $X_d$ and proposition \ref{observation} (3), we get the following (we refer the reader to ~\cite{Sh} for a stronger result).
\begin{corollary}
Almost any $x\in X_d$ is GFL for $d\ge 3$.
\end{corollary}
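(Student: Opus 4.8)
The plan is to combine the ergodicity of the $A_d$-action with the fact that \emph{some} GFL lattice exists, the latter being supplied by Theorem \ref{FL}, and then to pull GFL-ness back along dense orbits via Proposition \ref{observation}(3). Let $m$ denote the $G_d$-invariant probability measure on $X_d$; ``almost any'' is meant with respect to $m$.

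First I would record that the action of $A_d$ on $X_d$ is ergodic with respect to $m$: this is the Howe--Moore theorem applied to the non-compact subgroup $A_d<G_d$ (any non-compact closed subgroup of $G_d$ acts ergodically, in fact mixingly, on $X_d$). Next I would deduce that $m$-almost every $x\in X_d$ has a dense $A_d$-orbit. Indeed, fix a countable basis $\{U_i\}_{i\in\bN}$ of non-empty open subsets of $X_d$. For each $i$ the set $E_i=\{x\in X_d : \overline{A_dx}\cap U_i=\emptyset\}$ is $A_d$-invariant, and its complement contains the open set $U_i$, which has positive measure since $m$ has full support; by ergodicity $m(E_i)=0$. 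Hence $E=\bigcup_i E_i$ is $m$-null, and every $x\notin E$ satisfies $\overline{A_dx}=X_d$.

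Then I would invoke Theorem \ref{FL}. Totally real number fields of degree $d$ exist for every $d\ge 3$ (for instance, a suitable small rational perturbation of $\prod_{i=1}^d(x-i)$ is irreducible over $\bQ$ and still has $d$ distinct real roots, so its root field is totally real of degree $d$). Fix such a field $K$ and a lattice $\Lam$ in it; by Theorem \ref{FL} the lattice $x_\Lam\in X_d$ is GFL. Finally, for every $x\notin E$ we have $x_\Lam\in X_d=\overline{A_dx}$, so Proposition \ref{observation}(3) gives that $x$ is GFL. Thus the complement of $E$ consists of GFL lattices, proving the corollary.

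I do not expect a genuine obstacle here: all the substance sits in Theorem \ref{FL} and in Howe--Moore ergodicity, both of which are available. The only points demanding a word of care are (i) the passage from ergodicity of a group action to almost-sure density of its orbits, handled by the countable-basis argument above, and (ii) the (classical) existence of a totally real number field of each degree $d\ge 3$ so that the pool of GFL lattices guaranteed by Theorem \ref{FL} is non-empty in every dimension.
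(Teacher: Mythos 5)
Your proposal is correct and follows exactly the paper's route: the paper derives this corollary from ergodicity of the $A_d$-action on $X_d$ together with Proposition \ref{observation}(3) and the GFL lattices supplied by Theorem \ref{FL}, which is precisely your argument. Your additional details (Howe--Moore ergodicity, the countable-basis argument for almost-sure orbit density, and the existence of totally real fields of every degree $d\ge 3$) just make explicit what the paper leaves to the reader.
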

The following result appears for example in \cite{LW}: 
\begin{theorem}
The compact orbits for $A_d$ in $X_d$ are exactly the orbits of lattices coming from totally real number fields of degree $d$. 
\end{theorem}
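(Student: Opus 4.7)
The plan is to prove the two inclusions separately.

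For the inclusion that number-field lattices have compact orbits, given a lattice $\Lam$ in a totally real field $K$ of degree $d$, I would verify $\vphi(O_{\Lam,+}^*)\subset A_{d,x_\Lam}$: each totally positive unit $\om$ satisfies $\prod_i\sig_i(\om)=N_{K/\bQ}(\om)=1$ (the norm of a unit is $\pm 1$, positive by total positivity), so $\vphi(\om)\in A_d$, and $\om\Lam=\Lam$ yields $\vphi(\om)x_\Lam=x_\Lam$. Dirichlet's unit theorem gives $\mathrm{rank}(O_{\Lam,+}^*)=d-1=\dim A_d$, so $A_{d,x_\Lam}$ contains a lattice in $A_d$ and the orbit $A_dx_\Lam$ is compact.

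For the reverse inclusion, suppose $A_dx$ is compact with $x=\bar g$. Then $A_{d,x}$ is a lattice of rank $d-1$ in $A_d$, and its conjugate $\Del:=g^{-1}A_{d,x}g\subset\Ga_d$ still consists of commuting matrices simultaneously $\bR$-diagonalizable with positive eigenvalues. The central object is the commutative $\bQ$-algebra $B:=\bQ[\Del]\subset M_d(\bQ)$. Because $B\otimes\bR$ is conjugate into the diagonal subalgebra of $M_d(\bR)$, $B$ is semisimple and splits as $B\cong K_1\times\cdots\times K_r$ with a compatible $B$-module decomposition $\bQ^d\cong\bigoplus V_i$, each $V_i$ a $K_i$-vector space of some dimension $m_i$.

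The crux is a double rank count. On one side, the lattice property of $\Del$ in $A_d\cong\bR^{d-1}$ means $\log\Del$ spans the full $(d-1)$-dimensional Lie algebra, so the $d$ coordinate characters of $A_d$ restrict to pairwise distinct characters on $\Del$ (otherwise $\log\Del$ would lie in a proper hyperplane). Over $\bR$, the characters of $B$ on $\bR^d$ are indexed by pairs $(i,\sig\colon K_i\hookrightarrow\bR)$ each with multiplicity $m_i$; simultaneous $\bR$-diagonalizability of $B$ rules out complex embeddings of any $K_i$ (a complex embedding contributes a two-dimensional irreducible real $K_i$-submodule on which $K_i$ does not act $\bR$-diagonalizably), so every $K_i$ is totally real, and character distinctness on $\Del$ forces $m_i=1$, whence $\sum[K_i:\bQ]=d$. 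On the other side, $\Del$ acts on each $V_i\cong K_i$ through units of some order in $K_i$, whose $\bZ$-rank is at most $[K_i:\bQ]-1$ by Dirichlet; hence $d-1=\mathrm{rank}(\Del)\le\sum([K_i:\bQ]-1)=d-r$, forcing $r=1$. Thus $B=K$ is a totally real field of degree $d$, and any $K$-module isomorphism $\bQ^d\cong K$ sends $\bZ^d$ to a lattice $\Lam\subset K$.

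To conclude, I would extend this isomorphism to $\bR^d\cong K\otimes\bR\cong\prod_\sig\bR$ through the real embeddings, under which the $K$-action becomes diagonal and $g\bZ^d$ is realized as $D\cdot\vphi(\Lam)$ for some diagonal $D$ with nonzero real entries. Since $K$ is totally real, I can replace $\Lam$ by $\al\Lam$ for an $\al\in K^*$ of prescribed signature to make $D$ positive; normalizing covolume to one then writes $D=ca$ with $c>0$ and $a\in A_d$, exhibiting $x\in A_d\cdot x_\Lam$. I expect the main obstacle to be the interplay between character distinctness and the Dirichlet rank bound in paragraph three, which is precisely what simultaneously pins down $r=1$ and $[K:\bQ]=d$.
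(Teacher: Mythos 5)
The paper itself offers no proof of this theorem; it is quoted with a citation to \cite{LW}, so there is no internal argument to compare with. Your proof is the standard one (going back to Cassels--Swinnerton-Dyer and appearing in \cite{LW}) and it is correct: totally positive units of norm one give a discrete rank-$(d-1)$ subgroup of the stabilizer for one inclusion, and for the converse the conjugated stabilizer $\Del\subset\Ga_d$ generates a commutative semisimple $\bQ$-algebra $\bQ[\Del]\cong K_1\times\cdots\times K_r$, where simultaneous real diagonalizability rules out complex places, multiplicity-freeness of the $d$ coordinate characters on the cocompact stabilizer forces $m_i=1$, and the Dirichlet bound $d-1\le\sum([K_i:\bQ]-1)=d-r$ pins down $r=1$ and $[K:\bQ]=d$. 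The one step deserving an extra line in a full writeup is the identification $g\bZ^d=D\cdot\vphi(\Lam)$: it follows because both $g^{-1}$ and the real extension of the module isomorphism $\bQ^d\cong K$ diagonalize $\Del$, and since the $d$ eigencharacters are pairwise distinct the two changes of basis differ by a monomial matrix (diagonal times permutation), the permutation being absorbed into the chosen ordering of the embeddings $\sig_1,\dots,\sig_d$.
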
   
This gives us the following corollary, which is a special case of proposition \ref{observation} (3), combined with theorem ~\ref{FL}. We state it separately because of its interesting resemblance to theorem 1.3 from \cite{LW}.
\begin{corollary}
For $x\in X_d$, if $\overline{A_dx}$ contains a compact $A_d$ orbit, then $x$ is GFL.
\end{corollary}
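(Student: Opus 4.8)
The plan is to deduce this purely formally from three results already in hand: the classification of compact $A_d$-orbits in $X_d$, Theorem~\ref{FL}, and the inheritance statement Proposition~\ref{observation}(3). Throughout I read the corollary under the standing hypothesis $d\ge 3$ of this section, since Theorem~\ref{FL} is only available there.

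First I would pick a compact $A_d$-orbit $A_dx_0\subseteq\overline{A_dx}$, which exists by assumption. By the quoted classification of compact $A_d$-orbits, $A_dx_0$ equals the orbit $A_dx_\Lam$ of some lattice $x_\Lam$ coming from a totally real number field of degree $d$. In particular $x_\Lam\in A_dx_0\subseteq\overline{A_dx_0}$, and Theorem~\ref{FL} tells us that $x_\Lam$ is GFL; hence Proposition~\ref{observation}(3) gives that $x_0$ itself is GFL. (If one prefers to avoid invoking the classification in this slightly roundabout way, one can instead observe directly that GFL is an $A_d$-invariant property — this follows from the $A_d$-equivariance of $\pi$ together with Proposition~\ref{observation}(1) — and then conclude that $x_0$ is GFL because its orbit contains the GFL lattice $x_\Lam$.)

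Second, having produced a GFL lattice $x_0$ lying in $\overline{A_dx}$, a single further application of Proposition~\ref{observation}(3) yields that $x$ is GFL, which is exactly the assertion.

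I do not expect a genuine obstacle here: the argument is a short two-step chase through previously established facts. The only point requiring care is the degree hypothesis — the characterization of compact orbits holds for every $d$, but the GFL-ness of lattices from number fields (Theorem~\ref{FL}, obtained via the ID property of Theorem~\ref{ID} and Dirichlet's unit theorem) needs the unit group to have rank at least $2$, i.e. $d\ge 3$, so the corollary is understood and proved under that restriction, consistently with the opening of this section.
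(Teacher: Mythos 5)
Your argument is correct and is exactly the intended one: the paper states this corollary as an immediate combination of the classification of compact $A_d$-orbits, Theorem~\ref{FL}, and Proposition~\ref{observation}(3), which is precisely the chain you carry out (and your remark about the $d\ge 3$ restriction is the right caveat).
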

Let us end this paper with two open problems which emerge from our discussion. 
\begin{problem}
Give an explicit example of a Littlewood lattice in dimension 2. In particular, prove that any lattice with a compact $A_2$ orbit, is L. 
\end{problem}
\begin{problem}
Do there exists two dimensional GFL lattices? 
\end{problem}
%----------Appendix
\section{Appendix}
\begin{proof}[Proof of lemma ~\ref{lemma 4.2 from EK}]
Let the notation be as in lemma ~\ref{lemma 4.2 from EK}. The statement of lemma \ref{technical lemma} simplifies when one chooses the $Y_i$'s  to be zero in the original statement:\\
\textbf{Lemma \ref{technical lemma}, simplified version}:\textit{
For a fixed element $e\ne a\in A_{d+1}$ 
there exist $\lam>1$ and $\eta,c>0$ such that for any $X_i\in B_\eta^{\gou^+(a)}, i=1,2$, if for an integer $k$, for any $0\le j\le k$ 
$$d(a^j\exp X_1,a^j\exp X_2)<\eta$$  then for any $0\le j\le k$  $$d(a^j\exp X_1,a^j\exp X_2)\ge c\lam^j||X_1-X_2||.$$}\\
We apply this lemma for the element $a\in A_{d+1}$ appearing in the statement of lemma ~\ref{lemma 4.2 from EK}. Let $0<\del'<\max\left\{\eta,\del\right\}$ be a bi-Lipschitz radius for $K$, with respect to the chart $\exp$ (see \S\S~\ref{Metric conventions} for notation). Cover the compact set $\exp B_\del^{\gou^+(a)}x\cap K_C$ by finitely many sets of the form $\exp B_{\del'}^{\gou^+(a)}y_i\cap K_C$, for a suitable choice of points $y_i\in K_C$. By assumption there exists an $i$ such that $\dim\left(\exp B_{\del'}^{\gou^+(a)}y_i\cap K_C\right)>0.$ Because $\del'$ is a bi-Lipschitz radius, we have that the dimension of 
\begin{equation}\label{eq:5.1}
F=F(\del')=\left\{X\in B_{\del'}^{\gou^+(a)} : \exp Xy_i\in K_C\right\}
\end{equation}
is positive. Denote it by $2\rho$. By definition of dimension this means that there exists a sequence $\eps_k\searrow 0$, and $\eps_k-separated$ sets $S_k\subset F$, such that $|S_k|>\eps_k^{-\rho}.$ Let $n_k\nearrow\infty$ be a sequence such that $\lam^{-n_k}\le \eps_k<\lam^{-n_k+1}.$ 
Let $X_1,X_2\in S_k$ be two distinct points. Because $\del'$ is also an isometry radius for $K$, if $\ell$ is an integer such that $\forall 0\le j\le \ell$, $d(a^j\exp X_1y_i,a^j\exp X_2y_i)<\del'$, then the simplified version of lemma ~\ref{technical lemma}, stated above implies that $\forall 0\le j\le \ell$
\begin{equation}\label{eq:5.2}
\begin{array}{ll}
\del' & >d\left(a^j\exp X_1y_i,a^j\exp X_2y_i\right)\\
      & =d\left(a^j\exp X_1,a^j\exp X_2\right)\\
      & \ge c\lam^j||X_1-X_2||\ge c\lam^j\eps_k>c\lam^{j-n_k}.
\end{array}      
\end{equation}
This means that if $\eps_0=\min\left\{\del',c\right\}$, then $\left\{\exp Xy_i : X\in S_k\right\}$, is an $(\eps_0,n_k)-separating$ set for the action of $a$ on $K_C$. We conclude that 
$$h_{top}(K_C,a)\ge \lim_k\frac{1}{n_k}\log|S_k|\ge\lim_k\frac{-\rho\log\eps_k}{n_k}\ge\rho\log\lam>0.$$
Thus we achieve the desired conclusion.
\end{proof}

\end{document}